\let\origsection=\section \def\section{\@ifstar{\origsection*}{\mysection}} 
\def\mysection{\@startsection{section}{1}\z@{.7\linespacing\@plus\linespacing}{.5\linespacing}{\normalfont\scshape\centering\S}}
\renewcommand{\PrintDOI}[1]{\doi{#1}}
\numberwithin{equation}{section}
\numberwithin{figure}{section}
\def\rmlabel{\upshape({\itshape \roman*\,})}
\let\polishlcross=\l
\def\l{\ifmmode\ell\else\polishlcross\fi}
\let\emptyset=\varnothing
\let\setminus=\smallsetminus
\def\moverlay{\mathpalette\mov@rlay}
\def\mov@rlay#1#2{\leavevmode\vtop{   \baselineskip\z@skip \lineskiplimit-\maxdimen
   \ialign{\hfil$\m@th#1##$\hfil\cr#2\crcr}}}
\newcommand{\charfusion}[3][\mathord]{
    #1{\ifx#1\mathop\vphantom{#2}\fi
        \mathpalette\mov@rlay{#2\cr#3}
      }
    \ifx#1\mathop\expandafter\displaylimits\fi}
\DeclareFontFamily{U}  {MnSymbolC}{}
\DeclareSymbolFont{MnSyC}         {U}  {MnSymbolC}{m}{n}
\DeclareFontShape{U}{MnSymbolC}{m}{n}{
    <-6>  MnSymbolC5
   <6-7>  MnSymbolC6
   <7-8>  MnSymbolC7
   <8-9>  MnSymbolC8
   <9-10> MnSymbolC9
  <10-12> MnSymbolC10
  <12->   MnSymbolC12}{}
\DeclareMathSymbol{\powerset}{\mathord}{MnSyC}{180}
\theoremstyle{plain}
\newtheorem{theorem}{Theorem}[section]
\newtheorem{lemma}[theorem]{Lemma}
\theoremstyle{definition}
\newtheorem{claim}[theorem]{Claim}
\theoremstyle{remark}
\newcommand{\ang}[1]{\left\langle #1\right\rangle}
\newcommand{\G}{\mathcal{G}}
\newcommand{\Hom}{\mathrm{Hom}}
\newcommand{\EE}{\mathds{E}}
\newcommand{\RR}{\mathds{R}}
\newcommand{\HH}{\mathcal{H}}
\newcommand{\C}{\mathcal{C}}
\newcommand{\E}{\mathcal{E}}
\newcommand{\FF}{\mathcal{F}}
\begin{document}

\title{Odd cycles in subgraphs of sparse pseudorandom graphs}

\author{S\"oren Berger}
\author{Joonkyung Lee}
\address{Fachbereich Mathematik, Universit\"at Hamburg, Hamburg, Germany}
\email{\{soeren.berger,\,joonkyung.lee,\,mathias.schacht\}@uni-hamburg.de}

\author{Mathias Schacht}
\address{Department of Mathematics, Yale University, New Haven, USA}
\email{mathias.schacht@yale.edu}
\thanks{Research of the first two authors was supported by ERC Consolidator Grant PEPCo 724903.}

\date{}

\subjclass[2010]{Primary 05C35; Secondary 05C38, 05C80}

\begin{abstract}
We answer two extremal questions about odd cycles that naturally arise in the study of sparse pseudorandom graphs.
Let $\Gamma$ be an $(n,d,\lambda)$-graph, i.e., $n$-vertex, $d$-regular graphs with all nontrivial eigenvalues in the interval $[-\lambda,\lambda]$.
Krivelevich, Lee, and Sudakov conjectured that, whenever $\lambda^{2k-1}\ll d^{2k}/n$, every subgraph $G$ of $\Gamma$ with $(1/2+o(1))e(\Gamma)$ edges contains an odd cycle $C_{2k+1}$.
Aigner-Horev, H\`{a}n, and the third author proved a weaker statement by allowing an extra polylogarithmic factor in the assumption $\lambda^{2k-1}\ll d^{2k}/n$,
but we completely remove it and hence settle the conjecture.
This also generalises Sudakov, Szabo, and Vu's Tur\'{a}n-type theorem for triangles. 

Secondly, we obtain a Ramsey multiplicity result for odd cycles. Namely, in the same range of parameters, we prove that every 2-edge-colouring of $\Gamma$ contains at least $(1-o(1))2^{-2k}d^{2k+1}$ monochromatic copies of $C_{2k+1}$.
Both results are asymptotically best possible by Alon and Kahale's construction of $C_{2k+1}$-free pseudorandom graphs.
\end{abstract}

\maketitle

\section{Introduction}
In the last two decades, one of the major developments in extremal and probabilistic combinatorics has been the study of sparse (pseudo)random analogue of classical results. 
We continue to study analogues of classical theorems in sparse pseudorandom graphs.
An \emph{$(n,d,\lambda)$-graph} $\Gamma$ is a $d$-regular $n$-vertex graph such that the spectrum $d = \lambda_1 \ge \dots \ge \lambda_n$ of its adjacency matrix~$A_\Gamma$ satisfies $|\lambda_i|\leq\lambda$ for $i=2,3,\dots,n$.
Although this is one of the most well-known examples of pseurandom graphs and hence received considerable attention, as surveyed in~\cite{KS},
there are only very few analogues of classical theorems for $(n,d,\lambda)$-graphs.
For example, Sudakov, Szabo, and Vu~\cite{SSzV} proved an analogue of Tur\'an's theorem for $(n,d,\lambda)$-graphs,
where the range of parameters is believed to be optimal (for  other extremal or Ramsey-type results in this context see, e.g.,~\cites{ABSS,CFZ} and references therein).

We prove two analogues of classical results for $(n,d,\lambda)$-graphs that concern odd cycles~$C_{2k+1}$.
The range of parameters we focus on is always $\lambda^{2k-1} \ll {d^{2k}}/{n}$, which is tight for each $C_{2k+1}$ in the sense that there exists a $C_{2k+1}$-free $(n,d,\lambda)$-graph with $\lambda^{2k-1}=\Theta(d^{2k}/n)$ by the construction by Alon and Kahale~\cite{AK}, built on Alon's triangle-free pseudorandom graphs~\cite{A94}.

We study the Ramsey multiplicity of odd cycles in $(n,d,\lambda)$-graphs.
Let $N_{H}(G)$ be the number of labelled copies of $H$ in $G$.
A graph $H$ is \emph{common} if the number of monochromatic $H$-copies in a 2-edge-colouring of $K_n$ is minimised by the random colouring, i.e.,
\[
N_H(G) +N_H(\overline{G})\geq (1-o(1))2^{1-e(H)}n^{|V(H)|},
\]
whenever $G$ is an $n$-vertex graph and $\overline{G}$ is its complement.
In 1962, Erd\H{o}s~\cite{E62common} conjectured that every complete graph is common,
which is generalised by Burr and Rosta~\cite{BR80} for arbitrary graphs instead of complete graphs.
However, already the original Erd\H{o}s conjecture turned out to be false, as was shown by Thomason~\cite{Thom89} for every $K_t$, $t\geq 4$.
There are many common and uncommon graphs known since then~\cites{HHKNR12,JST96,Sid96}, although the complete classification is far beyond our reach.
In particular, Sidorenko~\cite{Sid89} proved that every odd cycle is common.
We obtain a sparse pseudorandom analogue of Sidorenko's theorem.
\begin{theorem}\label{thm:main_common}
Let $\varepsilon>0$ and let $\Gamma$ be an $(n,d,\lambda)$-graph.
Then there exists $\eta>0$ such that, whenever $\lambda^{2k-1} \le \eta {d^{2k}}/{n}$ and $G$ is a subgraph of $\Gamma$,
\[
N_{C_{2k+1}}(G) +N_{C_{2k+1}}(\Gamma\setminus G)\geq (1-\varepsilon)2^{-2k}d^{2k+1}.
\]
\end{theorem}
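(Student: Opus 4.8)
The plan is to lift Sidorenko's proof that odd cycles are common to the pseudorandom regime, using the spectral gap of $\Gamma$ to replace the ``long'' stretches in the relevant matrix traces by their rank-one projections onto $\mathbf{1}$.

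First, write $A$ for the adjacency matrix of $\Gamma$ and put $\Psi:=A_G-\tfrac12 A$, so that $A_G=\tfrac12 A+\Psi$ and $A_{\Gamma\setminus G}=\tfrac12 A-\Psi$; here $\Psi$ is supported on $E(\Gamma)$ and $\abs{\Psi}\le\tfrac12 A$ entrywise. Since $N_{C_{2k+1}}(H)$ equals $\operatorname{tr}(A_H^{2k+1})$ minus the number of non-injective closed walks of length $2k+1$ in $H$, and the latter is $o(d^{2k+1})$ for $H\in\{G,\Gamma\setminus G\}$ (a routine estimate, using $\lambda^2=\Omega(d)$ and hence $n=O_k(d^{(2k+1)/2})$), one gets
\[ N_{C_{2k+1}}(G)+N_{C_{2k+1}}(\Gamma\setminus G)=\operatorname{tr}\big((\tfrac12 A+\Psi)^{2k+1}\big)+\operatorname{tr}\big((\tfrac12 A-\Psi)^{2k+1}\big)+o(d^{2k+1})=2\!\!\sum_{\substack{S\subseteq E(C_{2k+1})\\ \abs{S}\ \mathrm{even}}}\!\!\operatorname{Sum}(S)+o(d^{2k+1}), \]
where, viewing $S$ as a cyclic arrangement of maximal arcs of lengths $j_1,\dots,j_m$ and gaps of lengths $g_1,\dots,g_m\ge 1$ (so $\sum_r(g_r+j_r)=2k+1$), we set $\operatorname{Sum}(S)=\operatorname{tr}\big(\prod_{r=1}^m(\tfrac12 A)^{g_r}\Psi^{j_r}\big)$. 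The $S=\emptyset$ term equals $2^{-(2k+1)}\operatorname{tr}(A^{2k+1})=(1\pm\eta)(d/2)^{2k+1}$ because $\abs{\sum_{i\ge 2}\lambda_i^{2k+1}}\le\lambda^{2k-1}\sum_{i\ge 2}\lambda_i^2\le\lambda^{2k-1}nd\le\eta d^{2k+1}$.

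Next, fix a nonempty even $S$, which has at least one gap, and write $\tfrac12 A=\tfrac d2\pi_0+D'$ with $\pi_0=\tfrac1n\mathbf{1}\mathbf{1}^{\top}$, $\norm{D'}_{\mathrm{op}}\le\lambda/2$ and $D'\pi_0=\pi_0 D'=0$, so $(\tfrac12 A)^{g_r}=(\tfrac d2)^{g_r}\pi_0+(D')^{g_r}$ at each gap. Expanding every gap block and keeping the term where all gaps contribute $\pi_0$, a telescoping through the rank-one $\pi_0$ gives
\[ \Big(\tfrac d2\Big)^{\sum_r g_r}\prod_{r=1}^{m}\frac{\mathbf{1}^{\top}\Psi^{j_r}\mathbf{1}}{n}=\Big(\tfrac d2\Big)^{2k+1}\prod_{r=1}^{m}\tilde q_{j_r},\qquad \tilde q_j:=\frac{2^j}{n\,d^j}\,\mathbf{1}^{\top}\Psi^j\mathbf{1}. \]
Because $\norm{\Psi}_{\mathrm{op}}\le\norm{\tfrac12 A}_{\mathrm{op}}=d/2$ by entrywise domination, $(\tilde q_j)_{j\ge 0}$ is the moment sequence of a probability measure on $[-1,1]$ with $\tilde q_0=1$. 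Summing over all even $S$ (the empty product over the arcs of $S=\emptyset$ being $1$) and absorbing the remaining terms of $\operatorname{Sum}(S)$ into an error $\varepsilon' d^{2k+1}$, we obtain
\[ N_{C_{2k+1}}(G)+N_{C_{2k+1}}(\Gamma\setminus G)\ \ge\ 2\Big(\tfrac d2\Big)^{2k+1}\sum_{\substack{S\subseteq E(C_{2k+1})\\ \abs{S}\ \mathrm{even}}}\ \prod_{r}\tilde q_{j_r(S)}\ -\ \varepsilon' d^{2k+1}. \]
Sidorenko's theorem that $C_{2k+1}$ is common is precisely the inequality $\sum_{\abs{S}\ \mathrm{even}}\prod_r p_{j_r(S)}\ge 1$ for every moment sequence $(p_j)$ of a probability measure on $[-1,1]$ (its proof is a polynomial inequality in the moments and so covers all such sequences, not only graphon-realisable ones); applying it to $(\tilde q_j)$ and taking $\eta$, and hence $\varepsilon'$, small enough gives $N_{C_{2k+1}}(G)+N_{C_{2k+1}}(\Gamma\setminus G)\ge(1-\varepsilon)2^{-2k}d^{2k+1}$.

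The main obstacle is controlling the error $\varepsilon' d^{2k+1}$: the sum over all (constantly many) nonempty even $S$ of the non-leading terms of $\operatorname{Sum}(S)$, those in which some gap contributes $(D')^{g_r}$. If some gap still contributes $\pi_0$, the product has rank one and an operator-norm bound yields an extra factor $(\lambda/d)^{g_r}=o(1)$, more than enough. The genuine difficulty is the fully ``off-diagonal'' terms $\operatorname{tr}\big(\prod_r(D')^{g_r}\Psi^{j_r}\big)$: since $G\subseteq\Gamma$ need not be regular, $\norm{\Psi}_{\mathrm{op}}$ can be as large as $d/2$, so bounds through $\norm{\Psi}_{\mathrm{op}}$, the Frobenius norm, or Schatten norms give the required estimate only for $k=1$ — where they reprove the Sudakov--Szab\'{o}--Vu triangle theorem — and lose powers of $d$ when $k\ge 2$. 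To reach $o(d^{2k+1})$ one must exploit $\abs{\Psi}\le\tfrac12 A$ and the pseudorandomness of $\Gamma$ more economically, for instance by iteratively densifying along paths of length two in the spirit of Conlon--Fox--Zhao, so that after $k-1$ contractions only a triangle-type quantity remains on a weighted graph of effective density $\sim d^2/n$, for which $\lambda^{2k-1}\ll d^{2k}/n$ is exactly the sharp hypothesis — and by carrying this out without the polylogarithmic slack of Aigner-Horev--H\`{a}n--Schacht. This counting estimate is the heart of the argument; everything else is the formal reduction above.
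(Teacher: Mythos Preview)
Your reduction is sound up to the point where you isolate the fully off-diagonal terms $\operatorname{tr}\bigl(\prod_r (D')^{g_r}\Psi^{j_r}\bigr)$, and you are right that Schatten-type bounds fail for $k\ge 2$: already for $\abs{S}=2k$ (one gap of length $1$, one arc of length $2k$) the estimate $\norm{D'}_{\mathrm{op}}\operatorname{tr}(\Psi^{2k})\le \tfrac{\lambda}{2}\cdot(\tfrac d2)^{2k-2}\cdot\tfrac{nd}{4}$ would require $\lambda n=o(d^2)$, which is \emph{not} implied by $\lambda^{2k-1}\ll d^{2k}/n$ when $k\ge 2$. But a proposal that stops precisely at this step and gestures at densification ``in the spirit of Conlon--Fox--Zhao'' is not a proof; the counting estimate you defer is the entire content of the theorem. (The side assertion that Sidorenko's theorem is ``precisely'' the moment inequality $\sum_{\abs{S}\ \text{even}}\prod_r p_{j_r(S)}\ge 1$ for arbitrary moment sequences on $[-1,1]$ is correct but also requires an argument---essentially the one in Lemma~\ref{lem:better_sidorenko_lemma}.)

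The paper sidesteps your obstacle by never expanding the gap blocks spectrally. It keeps $\gamma=1_{E(\Gamma)}$ in every gap position and applies the Expander Mixing Lemma exactly \emph{once}, at a single edge of the cycle (Lemma~\ref{lem:diff}): for each even $J\subseteq E(C_{2k+1})$ one obtains $\EE\bigl[\langle f,\gamma\rangle_{C_{2k+1}}^J\bigr]=p\cdot\EE\bigl[\langle f,\gamma\rangle_{P_{2k}}^J\bigr]+O\bigl(\lambda\, h_{C_{2k}}(\Gamma)/n^{2k+1}\bigr)$, and this single error is controlled by the spectral bound $h_{C_{2k}}(\Gamma)\le d^{2k}+\lambda^{2k-2}dn$ (Lemma~\ref{lem: spectral argument}), which is exactly where the hypothesis $\lambda^{2k-1}\ll d^{2k}/n$ is spent. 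For the resulting \emph{path} quantity no further pseudorandomness is needed: $d$-regularity of $\Gamma$ lets one strip the $\gamma$-stretches outside the first and last edge of $J$ as exact powers of $p$, and the remaining sum over $J\in\mathcal{E}_+(P_{2k})$ is shown nonnegative by a purely combinatorial AM--GM argument using the reflection symmetry of $P_{2k}$ about its midpoint (Lemma~\ref{lem:better_sidorenko_lemma}). One EML application plus regularity thus replaces your $m$-fold spectral expansion, and the off-diagonal terms you cannot bound simply never arise.
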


Secondly, we prove an analogue of the Erd\H{o}s--Stone theorem for odd cycles,
stating that every $n$-vertex graph with more than half of the all possible edges must contain a copy of an odd cycle of fixed length.
Theorem~\ref{thm:main_turan} below yields the same conclusion for subgraphs of suitable $(n,d,\lambda)$-graphs with relative density $1/2+o(1)$.
Obviously, Alon and Kahale's $C_{2k+1}$-free graphs do not possess the Erd\H{o}s--Stone property for $C_{2k+1}$ and Krivlevich, Lee, and Sudakov~\cite{KLS10} conjectured that the example by Alon and Kahale is asymptotically optimal. 
We verify this conjecture.
\begin{theorem}\label{thm:main_turan}
Let $k\geq 1$ be an integer and let $\delta> 0$.
Then there exist $\eta>0$ and $n_0$ such that the following holds:
let $n  \ge n_0$ and let $\Gamma$ be an $(n,d,\lambda)$-graph satisfying $\lambda^{2k-1} \le \eta {d^{2k}}/{n}$.
If $G\subset \Gamma$ is a subgraph such that $e(G) \ge  \left( \tfrac{1}{2} + \delta \right) \tfrac{d}{n} \binom{n}{2}$, then there is a copy of $C_{2k+1}$ in~$G$.
\end{theorem}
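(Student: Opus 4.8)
The plan is to reduce the statement, by the sparse regularity method applied relative to the pseudorandom host $\Gamma$, to a dense extremal fact --- namely that $\mathrm{ex}(t,C_{2k+1})=\lfloor t^2/4\rfloor$ for all sufficiently large $t$ --- the one genuinely new ingredient being a one-sided counting lemma for $C_{2k+1}$ in subgraphs of $(n,d,\lambda)$-graphs that is valid in the whole range $\lambda^{2k-1}\ll d^{2k}/n$ rather than only up to a polylogarithmic factor.

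First I would regularise $G$ relative to $\Gamma$. Since $\Gamma$ is an $(n,d,\lambda)$-graph, the expander mixing lemma makes every pair of not-too-small vertex sets behave like a random bipartite graph of density $d/n$ up to an additive error of order $\lambda$, and the hypothesis forces $\lambda\le\eta^{1/(2k-1)}d$, so on the scale of an equitable partition into a bounded number $t=t(\delta)$ of parts this error is negligible. Thus one obtains $V(\Gamma)=V_1\cup\dots\cup V_t$ such that all but an $\varepsilon$-fraction of the pairs $\{i,j\}$ are $(\varepsilon,\Gamma)$-regular for $G$, with relative densities $d_G(V_i,V_j)=e_G(V_i,V_j)/e_\Gamma(V_i,V_j)\in[0,1]$ (here $G\subseteq\Gamma$ is used). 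Since the edges of $G$ lying inside a part or inside an irregular pair number at most $(t^{-1}+\varepsilon)e(\Gamma)$ --- using the $d$-regularity of $\Gamma$ together with $G\subseteq\Gamma$ --- the assumption $e(G)\ge(\tfrac12+\delta)\tfrac dn\binom n2$ gives that the weighted reduced graph on $[t]$ carries total weight at least $(\tfrac12+\tfrac\delta4)\binom t2$ once $\varepsilon$ is small and $t$ large.

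Now discard every pair of weight below $\delta/8$; at most $\tfrac\delta8\binom t2$ weight is lost, so the resulting \emph{unweighted} graph $R'$ on $[t]$ still has more than $(\tfrac12+\tfrac\delta8)\binom t2>\lfloor t^2/4\rfloor$ edges once $t$ is large. By the Tur\'an bound for odd cycles, $R'$ contains a copy of $C_{2k+1}$ on clusters $V_{i_1},\dots,V_{i_{2k+1}}$ whose consecutive pairs are $(\varepsilon,\Gamma)$-regular of relative density at least $\delta/8$. It remains to turn this into a genuine copy in $G$, and this is the heart of the matter: I would prove that whenever $\lambda^{2k-1}\le\eta(\delta,k)\,d^{2k}/n$, the number of copies of $C_{2k+1}$ in $G$ with $j$-th vertex in $V_{i_j}$ is at least $\big(\prod_j d_G(V_{i_j},V_{i_{j+1}})-o(1)\big)$ times the natural random prediction, which is a positive multiple of $d^{2k+1}$; in particular $G\supseteq C_{2k+1}$. (The same counting lemma, fed into a weighted reduced graph together with Sidorenko's theorem that odd cycles are common, would also deliver Theorem~\ref{thm:main_common}.)

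The hard part is precisely this counting lemma at the optimal exponent; it is here that the earlier argument of Aigner-Horev, H\`{a}n and Schacht lost a polylogarithmic factor, and the naive per-entry estimate $(A_\Gamma^{2k})_{uv}=d^{2k}/n+O(\lambda^{2k})$ is too crude, since $\lambda^{2k}$ may swamp $d^{2k}/n$ in the range we allow. The reason the exponent is $2k-1$ rather than $2k$ is already visible in $\Gamma$: the number of closed walks of length $2k+1$ is $\mathrm{tr}(A_\Gamma^{2k+1})=d^{2k+1}+\sum_{i\ge2}\lambda_i^{2k+1}$, and $\big|\sum_{i\ge2}\lambda_i^{2k+1}\big|\le\lambda^{2k-1}\sum_{i\ge2}\lambda_i^2=\lambda^{2k-1}(nd-d^2)$, which is $o(d^{2k+1})$ exactly when $\lambda^{2k-1}\ll d^{2k}/n$ --- two of the $2k+1$ powers being absorbed into $\sum_i\lambda_i^2=nd$. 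Carrying out the same bookkeeping for a subgraph $G$ of prescribed local densities is the task, and here I would exploit that deleting one vertex of $C_{2k+1}$ leaves the bipartite path $P_{2k}$, whose count obeys only a mild bipartite counting lemma, reducing matters to the single remaining closing edge, which I would control by a second-moment / densification argument in the spirit of Conlon--Fox--Zhao, again absorbing the fluctuations into $\sum_i\lambda_i^2=nd$. (As a soft check on the constant $\tfrac12$: a subgraph of $\Gamma$ of minimum degree above $(\tfrac12+o(1))d$ cannot be bipartite, by a one-line expander-mixing computation, so the real content is upgrading ``some odd cycle'' to ``$C_{2k+1}$''.)
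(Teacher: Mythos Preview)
Your route is the sparse regularity method: partition, form a reduced graph, find $C_{2k+1}$ there by the dense Tur\'an bound, and lift it via a one-sided counting lemma. This is \emph{not} what the paper does. The paper bypasses the regularity lemma altogether: it first proves the commonality statement (Theorem~\ref{thm:common_injective_count}) directly, via a Sidorenko-type expansion of $t_{C_{2k+1}}(g)+t_{C_{2k+1}}(\gamma-g)$ together with the spectral bound $h_{C_{2k}}(\Gamma)\le d^{2k}+\lambda^{2k-2}dn$; then, to deduce Theorem~\ref{thm:main_turan}, it passes to a large $\delta$-almost-regular subset $X$ on which $G$ has minimum degree above $(1/2+\delta/2)p|X|$ (Lemma~\ref{lem:regular}), and argues by contradiction: if $G[X]$ were $C_{2k+1}$-free, commonality forces $\Gamma[X]\setminus G$ to carry at least $2^{-2k}(1-o(1))(p|X|)^{2k+1}$ copies, yet its maximum degree is below $(1/2-\delta/2)p|X|$, which caps its $C_{2k+1}$-count strictly below that. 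So the logical flow is reversed relative to your parenthetical remark: the paper proves commonality first and derives Tur\'an from it, not the other way round.

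The substantive gap in your proposal is the counting lemma itself. You correctly identify it as the heart of the matter, but your sketch does not go beyond what Aigner-Horev, H\`an and Schacht already had --- and that is precisely where their polylogarithmic loss arose. The spectral calculation you display, $\bigl|\sum_{i\ge2}\lambda_i^{2k+1}\bigr|\le\lambda^{2k-1}\sum_{i\ge2}\lambda_i^2=\lambda^{2k-1}(nd-d^2)$, controls closed walks in $\Gamma$ globally; it does not by itself give a \emph{partite} counting lemma for a subgraph $G$ between prescribed $(\varepsilon,\Gamma)$-regular clusters, which is what your embedding step needs. Turning ``absorb two powers into $\sum_i\lambda_i^2=nd$'' into a statement about regular pairs is exactly the missing work; the Conlon--Fox--Zhao densification you invoke, in its published form, does not reach the optimal exponent for $C_{2k+1}$ in $(n,d,\lambda)$-graphs. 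The paper sidesteps this entirely: because it never applies a regularity lemma, it never needs a partite counting lemma --- the only counting it does is of homomorphisms in all of $\Gamma[X]$, where the global spectral bound on $h_{C_{2k}}(\Gamma)$ applies directly (Lemmas~\ref{lem: spectral argument} and~\ref{lem:diff}). That is what buys the removal of the polylog.
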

A similar result with a slightly stronger condition $\lambda^{2k-1}(\log n)^{(2k-1)(2k-2)}\ll d^{2k}/n$ was obtained by
Aigner-Horev, H\`{a}n, and the third author~\cite{AHS14}. However, those authors obtained such a result in the more general context of \emph{bijumbled} graphs, while we make use of the spectral estimate for the number of even cycles in $(n,d,\lambda)$-graphs (see Lemma~\ref{lem: spectral argument} below).

Our proof of Theorem~\ref{thm:main_turan} uses a stronger variant (see Theorem~\ref{thm:common_injective_count}) of the first main result, Theorem~\ref{thm:main_common}.
This is a new approach for the Erd\H{o}s-Stone-type problems in pseudorandom setting. However, one cannot expect an analogous solution to the variant of Tur\'{a}n's theorem proved by Sudakov, Szabo, and Vu~\cite{SSzV}, since Thomason~\cite{Thom89} showed that any $K_t$, $t\geq 4$, is uncommon.

\section{Preliminaries}
Throughout this paper, $\Gamma$ always denotes the $(n,d,\lambda)$ graph and $1_\Gamma(x,y)$ is the indicator function of the edge set $E(\Gamma)$. For brevity, $p=d/n$ denotes the edge density of $\Gamma$. We use the standard notation $f(n)\ll g(n)$ if $f(n)/g(n)\to 0$ as $n\to\infty$. We will also write $x=a\pm b$ if and only if $a-b\leq x\leq a+b$.
For each $k> 2$, $C_{k}$ denotes the cycle of length~$k$ and $C_2$ means the single edge graph $K_2$. We denote by $P_k$ the $k$-edge path on~$k+1$ vertices.

In what follows, we shall use the fact $1\ll d$ and $\lambda\ll d$, which are trivial consequences of the crucial condition $\lambda^{2k-1}\ll d^{2k}/n$.
The number of vertices $n=|V(\Gamma)|$ will be taken large enough.

When counting $H$-copies in $G$, it is often convenient to allow possibly degenerate copies of~$H$. 
For graphs $H$ and~$G$, denote by $h_H(G)$ the number of all homomorphisms from~$H$ to~$G$.
Let the \emph{graph homomorphism density} $t_H(G):=h_H(G)/|V(G)|^{|V(H)|}$,
that is, the number of homomorphisms from $H$ to~$G$ divided by the number of vertex maps from~$H$ to~$G$.
Indeed, the graph homomorphism density defined above naturally generalises to (not necessarily nonnegative) weighted graphs, i.e., for a symmetric function $f\colon V(G)^2\rightarrow \RR$,
\begin{align*}
    t_H(f):=\EE \bigg[ \prod_{ij \in E(H)} f(x_i,x_j) \bigg],
\end{align*}
where each $x_i$ is a uniform random vertex in $V(G)$ chosen independently.
We shall repeatedly use a key pseudorandom property of an $(n,d,\lambda)$-graph, given by the Expander Mixing Lemma.
\begin{lemma}[Expander Mixing Lemma]\label{lem:EML}
Let $\Gamma$ be an $n$-vertex graph whose nontrivial eigenvalues lie in the interval $[-\lambda,\lambda]$. Then for every weight function $u,v\colon V(\Gamma)\rightarrow [0,1]$, 
\begin{align}\label{iq:weighted-bijumbled}
	\left| \sum_{x,y \in V(\Gamma)}u(x)1_\Gamma(x,y)v(y) - \frac{d}{n} \sum_{x\in V(\Gamma)} u(x)  \sum_{y\in V(\Gamma)} v(y) \right| \le \lambda\sqrt{\sum_{x\in V(\Gamma)} {u(x)}^2 \sum_{y\in V(\Gamma)} {v(y)}^2}.
\end{align}
\end{lemma}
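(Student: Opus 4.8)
The plan is to diagonalise the adjacency matrix of $\Gamma$ and to peel off the contribution of the top eigenvalue, which will reproduce exactly the ``expected'' main term, leaving a tail that is controlled by $\lambda$ via Cauchy--Schwarz. Write $A=A_\Gamma$ for the adjacency matrix of $\Gamma$; it is real symmetric, and since $\Gamma$ is $d$-regular the all-ones vector is an eigenvector for the eigenvalue $d$. Hence there is an orthonormal eigenbasis $e_1,\dots,e_n$ of $\RR^n$ with $Ae_i=\lambda_i e_i$, where $e_1=n^{-1/2}(1,\dots,1)^\top$ and, by hypothesis, $\abs{\lambda_i}\le\lambda$ for all $i\ge 2$.

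First I would rewrite the double sum as a bilinear form: regarding $u$ and $v$ as column vectors in $\RR^n$, one has $\sum_{x,y\in V(\Gamma)}u(x)1_\Gamma(x,y)v(y)=u^\top A v$. Expanding $u=\sum_i\alpha_i e_i$ and $v=\sum_i\beta_i e_i$ in the eigenbasis and using $Ae_i=\lambda_i e_i$ together with orthonormality yields $u^\top A v=\sum_{i=1}^n\lambda_i\alpha_i\beta_i$.

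Next I would isolate the $i=1$ summand. Since $\alpha_1=\ang{u,e_1}=n^{-1/2}\sum_{x}u(x)$ and $\beta_1=n^{-1/2}\sum_{y}v(y)$, the $i=1$ term equals $\lambda_1\alpha_1\beta_1=\tfrac{d}{n}\sum_{x}u(x)\sum_{y}v(y)$, which is precisely the quantity subtracted on the left-hand side of~\eqref{iq:weighted-bijumbled}. So the expression inside the absolute value there equals $\sum_{i=2}^n\lambda_i\alpha_i\beta_i$, and it remains only to bound this tail.

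Finally, using $\abs{\lambda_i}\le\lambda$ for $i\ge 2$, the Cauchy--Schwarz inequality, and Parseval's identity $\sum_i\alpha_i^2=\norm{u}^2=\sum_x u(x)^2$ (and likewise for $v$), I would conclude
\[
\abs{\sum_{i=2}^n\lambda_i\alpha_i\beta_i}\le\lambda\sum_{i=2}^n\abs{\alpha_i}\,\abs{\beta_i}\le\lambda\sqrt{\sum_{i=2}^n\alpha_i^2}\sqrt{\sum_{i=2}^n\beta_i^2}\le\lambda\norm{u}\,\norm{v},
\]
which is exactly the asserted inequality. I do not expect a genuine obstacle: the only point that needs care is the bookkeeping confirming that the $i=1$ eigenvalue contributes precisely the product term $\tfrac{d}{n}\sum_x u(x)\sum_y v(y)$, so that after its removal one is left with a tail governed by $\lambda$. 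It is worth remarking that the hypothesis $u,v\colon V(\Gamma)\to[0,1]$ is never used; the inequality holds verbatim for arbitrary real-valued $u$ and $v$, and the stated form is merely the one convenient for the later applications.
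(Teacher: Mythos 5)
Your proof is correct and is exactly the standard spectral argument that the paper invokes (it cites the unweighted versions of Alon--Milman and Tanner and notes the weighted statement ``can easily be derived by following the standard proofs''): diagonalise $A_\Gamma$, peel off the $e_1$-contribution to recover the main term $\tfrac{d}{n}\sum_x u(x)\sum_y v(y)$, and bound the tail by Cauchy--Schwarz and Parseval. Your observation that the hypothesis $u,v\colon V(\Gamma)\to[0,1]$ is not needed is also accurate.
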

When $u$ and $v$ are $\{0,1\}$-valued, it appeared in~\cites{AM,T}. Our weighted version of the lemma can easily be derived by following the standard proofs of theirs.

The Expander Mixing Lemma yields an estimate on $h_{C_{2k+1}}(\Gamma)$ for every fixed $k$. For that fix a vertex, say $1$ in $V(C_k)=[k]$, and let $h_{C_{k}}(\Gamma;x)$ be the number of homomorphic copies of~$C_{k}$ that maps $1$ to $x\in V(\Gamma)$. Let $w_{k,\Gamma}(x,y)$ be the number of $k$-edge walks from~$x$ to $y$ in $\Gamma$. Then
\[
h_{C_{2k+1}}(\Gamma;x)=\sum_{y,z\in V(\Gamma)}w_{k,\Gamma}(x,y)1_\Gamma(y,z)w_{k,\Gamma}(x,z).
\]
Since $\sum_{y\in V(\Gamma)}w_{k,\Gamma}(x,y)=d^k$, the Expander Mixing Lemma yields
\[
\left| h_{C_{2k+1}}(\Gamma;x) - \frac{d^{2k+1}}{n} 
\right| 
\le \lambda \sum_{y\in V(\Gamma)} {w_{k,\Gamma}(x,y)}^2=\lambda \cdot h_{C_{2k}}(\Gamma;x).
\]
Summing over all $x\in V(\Gamma)$ hence gives 
\begin{align}\label{eq:oddcycle_estimate}
    \left| h_{C_{2k+1}}(\Gamma) - d^{2k+1}
\right| 
\le \lambda \cdot h_{C_{2k}}(\Gamma).
\end{align}

In the following section, we shall prove a slightly stronger statement, Theorem~\ref{thm:common_injective_count}, than Theorem~\ref{thm:main_common} by considering an `almost-regular' subgraph of $\Gamma$ induced on a large vertex subset instead of the $d$-regular graph $\Gamma$.
To this end, we say that a vertex subset $X\subseteq V(\Gamma)$ is \emph{$\delta$-almost-regular} if
\[
\deg_{\Gamma[X]}(x)= (1 \pm \delta) p |X|\text{ for all }x \in X.
\]
In particular, $V(\Gamma)$ is $\delta$-almost-regular for any $\delta>0$.
Indeed, we may replace $\Gamma$ by $\Gamma[X]$ in proving~\eqref{eq:oddcycle_estimate} to obtain an analogous bound. As
\[
h_{C_{2k+1}}(\Gamma[X];x)=\sum_{y,z\in X}w_{k,\Gamma[X]}(x,y)1_\Gamma(y,z)w_{k,\Gamma[X]}(x,z),
\]
the Expander Mixing Lemma gives
\[
\left| h_{C_{2k+1}}(\Gamma[X];x) - p\cdot d_{k}(x;\Gamma[X])^2 
\right| 
\le \lambda \cdot h_{C_{2k}}(\Gamma[X];x),
\]
where $d_{k}(x;\Gamma[X])$ denotes the number of $k$-edge walks in $\Gamma[X]$ starting at $x\in X$. Since~$X$ is $\delta$-almost-regular, $d_{k}(x;\Gamma[X])=(1\pm k\delta)p^{k}|X|^{k}$ for every $x\in X$.
Thus, we obtain
\begin{align}\label{eq:oddcycle_X}
    \left| h_{C_{2k+1}}(\Gamma[X]) - p^{2k+1}|X|^{2k+1}\right| 
\le \lambda \cdot h_{C_{2k}}(\Gamma) +2k\delta p^{2k+1}|X|^{2k+1}.
\end{align}

To bound the right-hand side above, we shall use the following spectral argument.

\begin{lemma}\label{lem: spectral argument}
Let $\Gamma$ be an $(n,d,\lambda)$-graph and let $k$ be a positive integer. Then
\begin{align*}
	h_{C_{2k}}(\Gamma)	\le d^{2k} + \lambda^{2k-2}dn.
\end{align*}
\end{lemma}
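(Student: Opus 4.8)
The plan is to pass to the spectral side of $\Gamma$. A homomorphism from the cycle $C_{2k}$ to $\Gamma$ is exactly a closed walk of length $2k$ in $\Gamma$, so
\[
h_{C_{2k}}(\Gamma) = \mathrm{tr}\bigl(A_\Gamma^{2k}\bigr) = \sum_{i=1}^{n} \lambda_i^{2k},
\]
where $d = \lambda_1 \ge \lambda_2 \ge \dots \ge \lambda_n$ denotes the spectrum of $A_\Gamma$. First I would split off the Perron eigenvalue, writing $\sum_{i=1}^n \lambda_i^{2k} = d^{2k} + \sum_{i=2}^n \lambda_i^{2k}$. For every $i\ge 2$ we have $|\lambda_i|\le\lambda$, and since the exponent is even,
\[
\lambda_i^{2k} = \lambda_i^{2k-2}\,\lambda_i^2 \le \lambda^{2k-2}\,\lambda_i^2,
\]
so that $\sum_{i=2}^n \lambda_i^{2k} \le \lambda^{2k-2}\sum_{i=2}^n \lambda_i^2 \le \lambda^{2k-2}\sum_{i=1}^n \lambda_i^2 = \lambda^{2k-2}\,\mathrm{tr}\bigl(A_\Gamma^2\bigr)$.

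The final step is to note that $\mathrm{tr}(A_\Gamma^2)$ counts closed walks of length $2$ in $\Gamma$, hence equals $\sum_{x\in V(\Gamma)}\deg_\Gamma(x) = dn$ by $d$-regularity. Combining the displays yields $h_{C_{2k}}(\Gamma)\le d^{2k}+\lambda^{2k-2}dn$, as claimed. There is essentially no obstacle here; the only points that require a little care are that the eigenvalue bound $|\lambda_i|\le\lambda$ applies only for $i\ge 2$ — which is precisely why the term $d^{2k}=\lambda_1^{2k}$ must be separated out before estimating — and that the step $\lambda_i^{2k}\le\lambda^{2k-2}\lambda_i^2$ relies on $2k$ being even, so that both $\lambda_i^{2k-2}$ and $\lambda_i^2$ are nonnegative and no sign cancellation can spoil the inequality.
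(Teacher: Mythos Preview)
Your proof is correct and follows essentially the same route as the paper: express $h_{C_{2k}}(\Gamma)$ as $\mathrm{tr}(A_\Gamma^{2k})=\sum_i \lambda_i^{2k}$, separate the top eigenvalue $d^{2k}$, bound the remaining terms by $\lambda^{2k-2}\sum_{i\ge 2}\lambda_i^2$, and use $\sum_i \lambda_i^2=\mathrm{tr}(A_\Gamma^2)=dn$. The arguments match step for step.
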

\begin{proof}
Since $h_{C_{k}}(\Gamma)=\mathrm{tr}(A^k_\Gamma)=\lambda_1^{k}+\lambda_2^{k}+\dots+\lambda_n^{k}$ for every $k\geq 2$,
\[
h_{C_{2k}}(\Gamma)=\lambda_1^{2k}+\lambda_2^{2k}+\dots+\lambda_n^{2k}
\leq d^{2k} +\lambda^{2k-2}(\lambda_2^2+\dots+\lambda_n^2) \leq d^{2k}+\lambda^{2k-2}dn,
\]
where the last inequality is from $\lambda_1^{2}+\lambda_2^{2}+\dots+\lambda_n^{2}=h_{K_2}(\Gamma)=dn$.
\end{proof}

Note that the assumption $\lambda^{2k-1}\ll d^{2k}/n$ in Theorem~\ref{thm:main_common} and~\ref{thm:main_turan} combined with the fact $\lambda\ll d$ implies $d^{2k+1}\gg \lambda\cdot h_{C_{2k}}(\Gamma)$ and hence, \eqref{eq:oddcycle_estimate} implies $h_{C_{2k+1}}(\Gamma)=(1\pm o(1))d^{2k+1}$.
Similarly, if $|X|\geq \mu n$, \eqref{eq:oddcycle_X} gives
\begin{align*}
    \left| h_{C_{2k+1}}(\Gamma[X]) - p^{2k+1}|X|^{2k+1}\right| 
\le \lambda p^{2k}n^{2k}+\lambda^{2k-1}pn^2 +k\delta p^{2k+1}|X|^{2k+1}.
\end{align*}
In particular,
\begin{align}\label{eq:oddcycle_final}
     \nonumber h_{C_{2k+1}}(\Gamma[X]) &\ge  p^{2k+1}|X|^{2k+1}\left(1- \frac{\lambda n^{2k} }{p|X|^{2k+1}}-\frac{\lambda^{2k-1}n^2}{p^{2k}|X|^{2k+1}} -2k\delta\right)\\
     &\ge  p^{2k+1}|X|^{2k+1}\left(1- \frac{\lambda  }{\mu^{2k+1}d}-\frac{\lambda^{2k-1}n}{\mu^{2k+1}d^{2k}} -2k\delta\right),
\end{align}
which essentially means $h_{C_{2k+1}}(\Gamma[X])\geq (1-o(1))(p|X|)^{2k+1}$.

The following lemma will be useful in proving that the number of the degenerate copies of an odd cycle $C_{2k+1}$ is negligible.
\begin{lemma}\label{lem:degenerate}
Let $H$ be the graph consisting of edge-disjoint $C_{2q}$ and $C_{2r+1}$ sharing exactly one vertex. Then
\begin{align*}
    h_H(\Gamma)\leq 
\frac{1}{n}d^{2(q+r)+1}+\lambda^{2q-2}d^{2r+2}+\lambda d^{2(q+r)}+\lambda^{2(q+r)-1}dn.
\end{align*}
\end{lemma}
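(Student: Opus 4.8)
The plan is to peel off the odd cycle $C_{2r+1}$ using the Expander Mixing Lemma and then reduce the leftover ``even part'' of $H$ to a single longer even cycle by a spectral comparison. Write $v$ for the unique common vertex of the two cycles making up $H$. Since a homomorphism $H\to\Gamma$ amounts to choosing the image $x$ of $v$ together with independent homomorphic copies of $C_{2q}$ and of $C_{2r+1}$ that send $v$ to $x$, we have $h_H(\Gamma)=\sum_{x\in V(\Gamma)}h_{C_{2q}}(\Gamma;x)\,h_{C_{2r+1}}(\Gamma;x)$. Inserting the pointwise estimate $h_{C_{2r+1}}(\Gamma;x)\le \tfrac{d^{2r+1}}{n}+\lambda\,h_{C_{2r}}(\Gamma;x)$, obtained from the Expander Mixing Lemma exactly as in the derivation of~\eqref{eq:oddcycle_estimate}, yields
\[
h_H(\Gamma)\ \le\ \frac{d^{2r+1}}{n}\,h_{C_{2q}}(\Gamma)\;+\;\lambda\sum_{x\in V(\Gamma)}h_{C_{2q}}(\Gamma;x)\,h_{C_{2r}}(\Gamma;x).
\]
By Lemma~\ref{lem: spectral argument} the first term is at most $\tfrac1n d^{2(q+r)+1}+\lambda^{2q-2}d^{2r+2}$, supplying the first two terms of the claimed bound.

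It remains to bound the sum in the second term, and the key point is the inequality $\sum_{x\in V(\Gamma)}h_{C_{2q}}(\Gamma;x)\,h_{C_{2r}}(\Gamma;x)\le h_{C_{2(q+r)}}(\Gamma)$; granting it, Lemma~\ref{lem: spectral argument} with $k=q+r$ bounds the second term above by $\lambda\,d^{2(q+r)}+\lambda^{2(q+r)-1}dn$, which are the remaining two terms. To prove this inequality I would diagonalise: let $u_1,\dots,u_n$ be an orthonormal eigenbasis of $A=A_\Gamma$ with eigenvalues $d=\lambda_1,\lambda_2,\dots,\lambda_n$. As $A$ is symmetric, $h_{C_{2q}}(\Gamma;x)=\sum_y w_{q,\Gamma}(x,y)^2=(A^{2q})_{xx}=\sum_i\lambda_i^{2q}u_i(x)^2$, and similarly with $r$ in place of $q$, so
\[
\sum_{x}h_{C_{2q}}(\Gamma;x)\,h_{C_{2r}}(\Gamma;x)=\sum_{i,j}\lambda_i^{2q}\lambda_j^{2r}\,c_{ij},\qquad c_{ij}:=\sum_{x}u_i(x)^2u_j(x)^2\ge 0.
\]
The matrix $(c_{ij})$ is doubly stochastic, since $\sum_j u_j(x)^2=1$ for every $x$ gives $\sum_j c_{ij}=\sum_i c_{ij}=1$. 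All exponents are even, hence $\lambda_i^{2q},\lambda_j^{2r}\ge 0$, so the weighted AM--GM inequality $\lambda_i^{2q}\lambda_j^{2r}\le\tfrac{q}{q+r}\lambda_i^{2(q+r)}+\tfrac{r}{q+r}\lambda_j^{2(q+r)}$ applies; summing against the weights $c_{ij}$ and using double stochasticity collapses the right-hand side to $\sum_i\lambda_i^{2(q+r)}=\mathrm{tr}(A^{2(q+r)})=h_{C_{2(q+r)}}(\Gamma)$. (One could equally well invoke Birkhoff--von Neumann to pass to a permutation and then the rearrangement inequality, since $t\mapsto t^{2q}$ and $t\mapsto t^{2r}$ are monotone on $[0,\infty)$.)

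Assembling the three estimates gives exactly the bound in the statement. The one place that needs care is this final step: the crude bound $\lambda_i^{2q}\lambda_j^{2r}\le\lambda_i^{2(q+r)}+\lambda_j^{2(q+r)}$ costs a factor of $2$ and would not reproduce the stated constants, so the convexity refinement---equivalently, exploiting the doubly-stochastic/rearrangement structure of $(c_{ij})$---is what carries the argument.
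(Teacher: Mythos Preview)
Your argument is correct and follows the same overall skeleton as the paper: apply the Expander Mixing Lemma to strip off the $C_{2r+1}$ factor, producing the main term $\tfrac{d^{2r+1}}{n}h_{C_{2q}}(\Gamma)$ plus an error controlled by $\lambda\cdot h_{H'}(\Gamma)$, where $H'$ is the bouquet of $C_{2q}$ and $C_{2r}$ sharing one vertex; then bound $h_{H'}(\Gamma)\le h_{C_{2(q+r)}}(\Gamma)$ and invoke Lemma~\ref{lem: spectral argument} twice.

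The only genuine difference is in how you justify $h_{H'}(\Gamma)\le h_{C_{2(q+r)}}(\Gamma)$. The paper does this in one line by a combinatorial observation: identifying two vertices at distance $2q$ on $C_{2(q+r)}$ yields a surjective graph homomorphism $C_{2(q+r)}\to H'$, so every homomorphism $H'\to\Gamma$ lifts (injectively, via composition) to a homomorphism $C_{2(q+r)}\to\Gamma$; this is what the paper means by ``a degenerate copy of $C_{2(q+r)}$''. Your route---writing $h_{C_{2q}}(\Gamma;x)=(A^{2q})_{xx}$, expanding in an eigenbasis, and then using weighted AM--GM against the doubly stochastic matrix $c_{ij}=\sum_x u_i(x)^2u_j(x)^2$---is a perfectly valid alternative and gives the same inequality with constant $1$. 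The paper's version is shorter and purely combinatorial; yours is analytic and has the mild advantage that it generalises immediately to weighted (nonnegative symmetric) kernels, where the homomorphic-image injection argument is unavailable. Either way the final bound is identical.
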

\begin{proof}
For each homomorphism $\phi\in\Hom(C_{2q},\Gamma)$, let $h_H(\Gamma;\phi)$ be the number of homomorphisms from $H$ to $\Gamma$ that extends $\phi$
and let $w_r(x;\phi)$ be the number of $r$-edge walks from the image of the shared vertex $v$ under $\phi$ to $x\in V(\Gamma)$. Then 
\[
h_{H}(\Gamma;\phi)=\sum_{x,y\in V(\Gamma)}w_r(x;\phi)\gamma(x,y)w_r(y;\phi).
\]
The Expander Mixing Lemma gives
\[
\left| h_{H}(\Gamma;\phi) - \frac{d^{2r+1}}{n} 
\right| 
\le \lambda \sum_{x\in V(\Gamma)} {w_{r}(x;\phi)}^2.
\]
Note that $\sum_{x\in V(\Gamma)}w_{r}(x;\phi)^2$ counts the number of homomorphisms from another graph~$H'$ obtained by $C_{2q}$ and $C_{2r}$ identified on the vertex $v$ that extends $\phi$.
In particular, this is a degenerate copy of $C_{2(q+r)}$. Thus, summing above over all $\phi\in\Hom(C_{2q},H)$ yields
\[
\left|h_H(\Gamma)-\frac{d^{2r+1}}{n}h_{C_{2q}}(\Gamma)\right|\leq \lambda\cdot h_{C_{2(q+r)}}(\Gamma)
\]
and applying Lemma~\ref{lem: spectral argument} concludes the proof.
\end{proof}
If $q+r=k$ and $\lambda^{2k-1}\ll d^{2k}/n$, then $h_H(\Gamma)\ll d^{2k+1}$. Whenever a homomorphic copy of~$C_{2k+1}$ is degenerate, it induces a homomorphic copy $H$ of two shorter cycles sharing one vertex. Hence, Lemma~\ref{lem: spectral argument} shows that most of the homormophic copies of $C_{2k+1}$ are nondegenerate.

\section{The relative commonality of odd cycles}

We shall prove the following slightly stronger statement than Theorem~\ref{thm:main_common}.
To avoid ambiguity in the normalising factor, $t_H(G)$ means $h_H(G)/|X|^{|V(H)|}$ whenever $G$ is a subgraph of~$\Gamma[X]$. 
\begin{theorem}\label{thm:common_injective_count}
For $0<\mu,\delta<1$ and an integer $k\geq1$, there exists $\eta = \eta(\delta,\mu,k)>0$ such that 
the following holds: let $\Gamma$ be an $(n,d,\lambda)$-graph satisfying $\lambda^{2k-1} \le \eta {d^{2k}}/{n}$ and let $X$ be a $\delta$-almost-regular vertex subset of $\Gamma$ 
with $|X| \ge \mu n$.
Then for every subgraph $G$ of $\Gamma[X]$,  we have
$$
N_{C_{2k+1}}(G)+N_{C_{2k+1}}(\Gamma[X]\setminus G)\geq \frac{1}{2^{2k}} (p|X|)^{2k+1}\left( 1 - 2^{8k}\delta \right).$$ 
\end{theorem}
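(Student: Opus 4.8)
First reduce to homomorphism counts. Every non‑injective homomorphism $C_{2k+1}\to\Gamma$ identifies a pair of non‑adjacent cycle vertices and hence factors through a graph of the form treated in Lemma~\ref{lem:degenerate} (edge‑disjoint $C_{2q}$ and $C_{2r+1}$ with $q+r=k$, glued at a vertex), or through a further quotient of one of these; by Lemma~\ref{lem:degenerate} and the remark after it, together with $\lambda^{2k-1}\le\eta d^{2k}/n$ and $|X|\ge\mu n$ (so that $d^{2k+1}\le\mu^{-(2k+1)}(p|X|)^{2k+1}$), the number of such homomorphisms to $\Gamma$ — a fortiori to $G$ or to $\Gamma[X]\setminus G$ — is at most $\varepsilon'(p|X|)^{2k+1}$ once $\eta$ is small and $n$ is large. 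So it suffices to bound $h_{C_{2k+1}}(G)+h_{C_{2k+1}}(\Gamma[X]\setminus G)$ from below, allowing a little slack in the constant; we may also assume $2^{8k}\delta<1$, as otherwise the claim is vacuous. Write $\bar G:=\Gamma[X]\setminus G$ and put $U:=2\cdot1_G-1_\Gamma$ on $X\times X$, a symmetric $\{-1,0,1\}$‑valued kernel supported on $E(\Gamma[X])$, so that $1_G=\tfrac12(1_\Gamma+U)$ and $1_{\bar G}=\tfrac12(1_\Gamma-U)$. Expanding each homomorphism count multilinearly over the $2k+1$ edges and using that the terms of odd order in $U$ cancel between $G$ and $\bar G$,
\[
h_{C_{2k+1}}(G)+h_{C_{2k+1}}(\bar G)=\frac1{2^{2k}}\sum_{\substack{S\subseteq E(C_{2k+1})\\ |S|\ \mathrm{even}}}H_S,\qquad H_S:=\sum_{x_1,\dots,x_{2k+1}\in X}\ \prod_{i\in S}U(x_i,x_{i+1})\prod_{i\notin S}1_\Gamma(x_i,x_{i+1}),
\]
indices modulo $2k+1$. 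Going around the cycle, $S$ splits into maximal runs of edge‑lengths $m_1,\dots,m_r$ separated by $\Gamma$‑runs of lengths $\ell_1,\dots,\ell_r$ (with $\sum m_j=|S|$ and $\sum\ell_j=2k+1-|S|$); writing $A_U$ for the matrix of $U$ and $B:=A_{\Gamma[X]}$, this reads $H_S=\mathrm{tr}\!\bigl(A_U^{m_1}B^{\ell_1}\cdots A_U^{m_r}B^{\ell_r}\bigr)$, and $H_\emptyset=\mathrm{tr}(B^{2k+1})=h_{C_{2k+1}}(\Gamma[X])$.

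Next, the spectral reduction. Let $\rho$ be the largest eigenvalue of $B$ and $v$ its unit Perron eigenvector; since $X$ is $\delta$‑almost‑regular one has $(1-\delta)p|X|\le\rho\le(1+\delta)p|X|$, and by Cauchy interlacing against $A_\Gamma$ every other eigenvalue of $B$ lies in $[-\lambda,\lambda]$, so $B=\rho\,vv^{\top}+E'$ with $\|E'\|\le\lambda$ and $E'v=0$; consequently $B^{\ell}=\rho^{\ell}vv^{\top}+(E')^{\ell}$ for all $\ell\ge1$. Substituting this for each $\Gamma$‑run in $H_S$ and expanding, the term in which every run contributes $\rho^{\ell_j}vv^{\top}$ equals $\rho^{\,2k+1-|S|}\prod_j v^{\top}A_U^{m_j}v$, while every remaining term carries at least one factor $(E')^{\ell_j}$. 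The latter are estimated using $\|E'\|\le\lambda$, the entrywise bound $|A_U|\le B$ (whence $\|A_U\|\le\rho$ and $\mathrm{tr}(A_U^{2j})\le h_{C_{2j}}(\Gamma)$), the identity $\mathrm{tr}(B^{2j})=h_{C_{2j}}(\Gamma[X])\le h_{C_{2j}}(\Gamma)$, Lemma~\ref{lem: spectral argument}, $|X|\ge\mu n$, and $\lambda^{2k-1}\le\eta d^{2k}/n$. Concretely, any term containing both a $vv^{\top}$‑ and an $(E')$‑factor has rank one and is bounded by $\rho^{2k+1}(\lambda/\rho)^{\ge1}$, which is negligible since $\lambda\ll\rho$; the "pure‑error" traces $\mathrm{tr}\!\bigl(\prod_j A_U^{m_j}(E')^{\ell_j}\bigr)$ and $\mathrm{tr}((E')^{2k+1})$ are handled by splitting the cyclic product and iterating Cauchy--Schwarz to reduce to traces of even powers of $A_U$ and $E'$, i.e.\ to the even‑cycle estimates above (a plain operator‑norm bound would lose a factor $1/p$ here). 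The net effect is that
\[
h_{C_{2k+1}}(G)+h_{C_{2k+1}}(\bar G)=\frac1{2^{2k}}\sum_{|S|\ \mathrm{even}}\rho^{\,2k+1-|S|}\prod_{j}\bigl(v^{\top}A_U^{m_j}v\bigr)\ +\ R,\qquad |R|\le C_k(\delta+\eta')(p|X|)^{2k+1},
\]
where $C_k$ depends only on $k$ and $\eta'\to0$ as $\eta\to0$.

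Finally, invoke Sidorenko's theorem. Put $\beta_m:=v^{\top}A_U^{m}v$, the $m$‑th moment of the spectral measure of $A_U$ at $v$; this is a probability measure supported in $[-\rho,\rho]$, so $\widehat\beta_m:=\rho^{-m}\beta_m$ is the moment sequence of a probability measure on $[-1,1]$ with $\widehat\beta_0=1$. Since $\prod_j\rho^{-m_j}=\rho^{-|S|}$, the main sum above equals $\dfrac{\rho^{2k+1}}{2^{2k}}\sum_{|S|\ \mathrm{even}}\prod_j\widehat\beta_{m_j}$, whose $S=\emptyset$ term is $1$. Now the inequality $\sum_{|S|\ \mathrm{even}}\prod_j\widehat\beta_{m_j}\ge1$, equivalently $\sum_{|S|\ \mathrm{even},\,|S|\ge2}\prod_j\widehat\beta_{m_j}\ge0$, is precisely the statement that $C_{2k+1}$ is common (Sidorenko~\cite{Sid89}): the identity
\[
\tfrac12\bigl(t_{C_{2k+1}}(\tfrac{1+W}{2})+t_{C_{2k+1}}(\tfrac{1-W}{2})\bigr)=2^{-2k}\sum_{|S|\ \mathrm{even}}\ \prod_{j}\langle \mathbf{1},\,T_W^{m_j}\mathbf{1}\rangle
\]
for symmetric $W$ turns commonality into exactly this moment inequality, and its proof uses only the Cauchy--Schwarz relations $\widehat\beta_{2j+1}^2\le\widehat\beta_{2j}\widehat\beta_{2j+2}$ among consecutive moments, so it applies verbatim to our $\widehat\beta$. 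Therefore $h_{C_{2k+1}}(G)+h_{C_{2k+1}}(\bar G)\ge \rho^{2k+1}/2^{2k}-|R|\ge 2^{-2k}(p|X|)^{2k+1}\bigl(1-2^{8k}\delta\bigr)$ after choosing $\eta=\eta(\delta,\mu,k)$ so small that $2^{2k}C_k\eta'$ is dominated by $\delta$ (using $\rho\ge(1-\delta)p|X|$, so $\rho^{2k+1}\ge(1-(2k+1)\delta)(p|X|)^{2k+1}$; the exponent $8k$ is comfortably generous); subtracting the negligible count of degenerate copies from the first paragraph finishes the proof.

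The main difficulty is the error bookkeeping in the spectral reduction: there are only $O_k(1)$ terms, but each bound must be uniform and may consume no more than the hypothesis $\lambda^{2k-1}\ll d^{2k}/n$, which forces one to pass through the even‑cycle estimate of Lemma~\ref{lem: spectral argument} rather than crude spectral bounds, and to estimate the pure‑error traces via iterated Cauchy--Schwarz rather than operator norms. A secondary, but essential, point is to check that the $G\leftrightarrow\bar G$ symmetrisation lands exactly on a moment inequality that Sidorenko's argument covers — in particular that nothing more than $\|A_U\|\le\rho$ (hence a spectral measure on $[-1,1]$) is needed, so that no assumption on the density of $G$ in $\Gamma$ enters.
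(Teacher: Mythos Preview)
Your approach is correct and genuinely different from the paper's. Both proofs begin with the same symmetrisation $U=2\cdot 1_G-1_{\Gamma[X]}$ and the expansion into even subsets $S\subseteq E(C_{2k+1})$, and both finish by subtracting degenerate homomorphisms via Lemma~\ref{lem:degenerate}. The divergence is in how the $\Gamma$-edges are ``opened''. The paper opens a \emph{single} $\gamma$-edge at a time via the Expander Mixing Lemma (Lemma~\ref{lem:diff}), reducing $C_{2k+1}$ to $P_{2k}$ through the differentiation identity, and then proves an approximate path inequality (Lemma~\ref{lem:better_sidorenko_lemma}) with explicit $\delta$-error tracking throughout. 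You instead open \emph{all} $\gamma$-edges simultaneously by the spectral splitting $B=\rho vv^\top+E'$ (using Cauchy interlacing to get $\|E'\|\le\lambda$), which pushes every pseudorandomness error into a single remainder $R$ and leaves an \emph{exact} dense-type inequality $\sum_{|S|\text{ even}}\prod_j\hat\beta_{m_j}\ge 1$ for the main term. What each buys: the paper's route is more elementary (only EML, no Schatten machinery) and entirely self-contained; your route cleanly decouples the analytic error from the combinatorial core, and the Cauchy-interlacing step is a pleasant way to transfer the spectral gap from $\Gamma$ to $\Gamma[X]$.

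Two points deserve tightening. First, your assertion that Sidorenko's proof ``uses only the Cauchy--Schwarz relations $\hat\beta_{2j+1}^2\le\hat\beta_{2j}\hat\beta_{2j+2}$'' is not quite accurate: the argument (see the proof of Lemma~\ref{lem:better_sidorenko_lemma}) conditions on a midpoint vertex and applies Cauchy--Schwarz to \emph{functions} $\phi,\psi$, not merely to scalar moments. This is harmless for you, because your $\hat\beta_m=v^\top(A_U/\rho)^m v$ are honest moments of the spectral measure of $A_U/\rho$ at $v$; equivalently, with the probability weights $\mu_i=v_i^2$ and the (possibly unbounded) kernel $W(i,j)=(A_U)_{ij}/(\rho v_iv_j)$ one has $\langle\mathbf 1,T_W^m\mathbf 1\rangle=\hat\beta_m$, and Sidorenko's proof goes through verbatim since it never uses $|W|\le 1$. (Note $v>0$: almost-regularity forces $\rho>(1-\delta)p|X|>\lambda\ge\lambda_2(B)$, so $\Gamma[X]$ is connected and the Perron eigenvector is positive.) Second, your ``iterated Cauchy--Schwarz'' for the pure-error traces is cleanest stated as non-commutative H\"older: with $\sum 1/p_i=1$, taking $p=\infty$ on each $E'$-factor and $p=|S|$ on each $A_U$-factor yields $|\mathrm{tr}(\cdots)|\le\lambda^{2k+1-|S|}\,\mathrm{tr}(A_U^{|S|})\le\lambda^{2k+1-|S|}h_{C_{|S|}}(\Gamma)$, after which Lemma~\ref{lem: spectral argument} and $\lambda^{2k-1}\le\eta d^{2k}/n$ give exactly the bound you need. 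With these two clarifications the argument is complete.
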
 
We remark that this strenghtening of Theorem~\ref{thm:main_common} is purely for the future purpose to derive Theorem~\ref{thm:main_turan}
and we did not attempt to optimise the constants.
The key ingredient in proving Theorem~\ref{thm:common_injective_count} is a homomorphism counting lemma. 

\begin{lemma}\label{lem:homcounting}
Let $\delta,\mu>0$ and let $\Gamma$ be an $(n,d,\lambda)$-graph.
For every $\delta$-almost-regular subset $X\subset V(\Gamma)$ with $|X|\geq \mu n$
and every subgraph $G$ of $\Gamma[X]$,
\begin{align*}
	t_{C_{2k+1}}(G)+t_{C_{2k+1}}(\Gamma[X] \setminus G)
	\ge\frac{1}{2^{2k}} p^{2k+1} \left( 1 - 2^{7k} \left( \delta +  \frac{\lambda}{\mu^{2k} d}  + \frac{\lambda^{2k-1}n}{\mu^{2k}d^{2k}} \right) \right).
\end{align*}
\end{lemma}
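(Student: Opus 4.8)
The plan is to reduce, by the substitution standard for commonality statements, to a pseudorandom relative version of Sidorenko's theorem that odd cycles are common. Set $g:=1_G-\tfrac12\mathbf 1_\Gamma$ on $X\times X$ (with $\mathbf 1_\Gamma$ restricted to $X$), so that $1_G=\tfrac12\mathbf 1_\Gamma+g$, $1_{\Gamma[X]\setminus G}=\tfrac12\mathbf 1_\Gamma-g$, and $g$ is symmetric, supported on $E(\Gamma[X])$, with $|g|\le\tfrac12\mathbf 1_\Gamma$. Expanding $\prod_i(\tfrac12\mathbf 1_\Gamma+g)$ and $\prod_i(\tfrac12\mathbf 1_\Gamma-g)$ over the $2k+1$ edges of $C_{2k+1}$ and adding, the terms with an odd number of $g$'s cancel, so
\[
t_{C_{2k+1}}(G)+t_{C_{2k+1}}(\Gamma[X]\setminus G)=\frac{1}{2^{2k}}t_{C_{2k+1}}(\Gamma[X])+\sum_{\substack{\emptyset\ne S\subseteq E(C_{2k+1})\\ |S|\text{ even}}}2^{|S|-2k}\,t_S ,
\]
where $t_S$ is the homomorphism density of the cyclic pattern carrying weight $g$ on the edges of $S$ and weight $\mathbf 1_\Gamma$ on the remaining $2k+1-|S|$ edges. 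By~\eqref{eq:oddcycle_final}, $2^{-2k}t_{C_{2k+1}}(\Gamma[X])\ge 2^{-2k}p^{2k+1}(1-E_0)$ with $E_0=O_k\!\big(\delta+\lambda\mu^{-2k-1}d^{-1}+\lambda^{2k-1}n\mu^{-2k-1}d^{-2k}\big)$, so the task is to bound the correction sum below by $-E_1\,2^{-2k}p^{2k+1}$ for a comparable $E_1$.

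I would handle the correction sum through a decoupled model. For each $S$, replace every maximal run of $\mathbf 1_\Gamma$-edges in the cyclic pattern of $t_S$ by its density $p$, applying the Expander Mixing Lemma (extended to real weights on $V(\Gamma)$ — valid since $\|A_\Gamma-p\mathbf J\|_{\mathrm{op}}\le\lambda$, where $\mathbf J$ is the all-ones matrix) one edge at a time; this turns $t_S$ into $t^{\circ}_S:=p^{\,2k+1-|S|}\prod_{a}t_{P_a}(g)$, the product running over the maximal $S$-runs and their lengths $a$, and turns $t_{C_{2k+1}}(\Gamma[X])$ into $p^{2k+1}$. Three facts persist unchanged in this model and drive the argument: $t_{P_{2s}}(g)=\EE_x\big[\psi_s(x)^2\big]\ge0$, where $\psi_s$ is the $s$-step $g$-weighted walk density out of a vertex; the Cauchy--Schwarz relation $t_{P_{2s+1}}(g)^2\le t_{P_{2s}}(g)\,t_{P_{2s+2}}(g)$; and $|t_{P_a}(g)|\le(p/2)^a(1+\delta)^a$, from $|g|\le\tfrac12\mathbf 1_\Gamma$ together with $\delta$-almost-regularity. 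Consequently $\sum_{|S|\text{ even}}2^{|S|-2k}t^{\circ}_S=2^{-2k}p^{2k+1}\sum_{|S|\text{ even}}\prod_a\gamma_a$ with $\gamma_a:=(2/p)^at_{P_a}(g)$, $\gamma_0=1$, $\gamma_{2s}\ge0$, $\gamma_{2s+1}^2\le\gamma_{2s}\gamma_{2s+2}$, and $|\gamma_a|\le(1+\delta)^a$ for $a\le2k$. Running the combinatorial argument underlying Sidorenko's theorem on the sequence $(\gamma_a)$ — grouping the even subsets so that every potentially negative product of $\gamma_a$'s is dominated, via these relations, by nonnegative ones — yields $\sum_{|S|\text{ even}}\prod_a\gamma_a\ge 1-O_k(\delta)$, hence $\sum_{|S|\text{ even}}2^{|S|-2k}t^{\circ}_S\ge 2^{-2k}p^{2k+1}(1-O_k(\delta))$.

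It remains to bound $\sum_{S}2^{|S|-2k}(t_S-t^{\circ}_S)$ by a careful case analysis. Each difference expands into terms with at least one factor of $E:=A_{\Gamma[X]}-p\mathbf J$, and they are estimated relative to $2^{-2k}p^{2k+1}$ as follows: an $E$ at the junction between an $\mathbf 1_\Gamma$-run and an $S$-run carries $\|E\mathbf 1\|_2\le\delta p|X|^{3/2}$ and hence a factor $O(\delta)$; an $E$ between two surviving $p\mathbf J$'s carries $\mathbf 1^{\top}E\mathbf 1=2e(\Gamma[X])-p|X|^2=O(\delta p|X|^2)$, again $O(\delta)$; and an $\mathbf 1_\Gamma$-run of length $b$ left entirely undecoupled is handled through the Hilbert--Schmidt bound $\|B_gE^{b}\|_{\mathrm{HS}}\le\lambda^{b}\|B_g\|_{\mathrm{HS}}$ with $B_g=(g(x,y))_{x,y\in X}$ and $\|B_g\|_{\mathrm{HS}}^2\le2e(\Gamma[X])\le(1+\delta)p|X|^2$, which contributes the ratio $O_k(\lambda^{2k-1}n\mu^{-2k}d^{-2k})$ after normalisation and an appeal to Lemma~\ref{lem: spectral argument}. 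Summing over the $O_k(1)$ subsets and the $O_k(1)$ error terms per subset, and letting $2^{7k}$ absorb the implied constants, gives the claimed inequality.

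The step I expect to be the real obstacle is the interplay between these two halves. The correction terms $t_S$ cannot be estimated individually, since there are of order $2^{2k}$ of them and a single $t_S$ can have magnitude comparable to $2^{-2k}p^{2k+1}$ with a negative sign, so one must reproduce the exact cancellation in Sidorenko's argument rather than bound term by term; this in turn forces the pseudorandom approximations to be performed with the structured bounds above rather than with the crude estimate $\|A_{\Gamma[X]}-p\mathbf J\|_{\mathrm{op}}\le\lambda+O(\delta p|X|)$, which, inserted into trace or operator-norm bounds, loses a factor of $p^{-1}$ (and produces $\lambda^{2k-1}n/d^{2k-1}$ in place of $\lambda^{2k-1}n/d^{2k}$) and would be fatal. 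Tracking $\mathbf 1^{\top}E\mathbf 1$, $\|E\mathbf 1\|_2$, and $\|B_g\|_{\mathrm{HS}}$ separately is exactly what keeps the total error linear in $\delta$ and in the jumbledness ratio $\lambda^{2k-1}n/d^{2k}$.
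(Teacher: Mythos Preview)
Your strategy --- fully decouple every $\mathbf 1_\Gamma$-edge via the Expander Mixing Lemma and then invoke the dense Sidorenko inequality --- is a genuine alternative to the paper's route, but it differs structurally, and your error analysis as written has a gap.

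The paper does \emph{not} decouple all $\gamma$-edges. It introduces the polynomial $Q_{C_{2k+1}}(z;f)=\sum_{J\in\mathcal E_+}\EE[\langle f,z\gamma\rangle_{C_{2k+1}}^J]$ and differentiates in $z$; this singles out exactly one $\gamma$-edge per term, and a \emph{single} application of the Expander Mixing Lemma removes it, opening the cycle into a $2k$-path with error uniformly controlled by $\lambda\, h_{C_{2k}}(\Gamma)$ and hence by Lemma~\ref{lem: spectral argument}. The path problem (Lemma~\ref{lem:better_sidorenko_lemma}) is then handled without further pseudorandomness: one strips the outer $\gamma$-edges using only $\delta$-almost-regularity, and proves $T_{2\ell}+2T_{2\ell-1}+T_{2\ell-2}\ge0$ by a \emph{functional} Cauchy--Schwarz at the midpoint vertex. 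Integrating in $z$ recovers the cycle bound.

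Two remarks on your decoupled half. First, after full decoupling one has exactly $\sum_{|S|\text{ even}}2^{|S|-2k}t_S^\circ=t_{C_{2k+1}}(p/2+g)+t_{C_{2k+1}}(p/2-g)$, and Sidorenko's proof (which works for arbitrary signed symmetric $g$) gives this $\ge 2^{-2k}p^{2k+1}$ \emph{exactly}, with no $O_k(\delta)$ loss; you do not need $|\gamma_a|\le(1+\delta)^a$. Second, Sidorenko's argument is not a scalar argument on $(\gamma_a)$: the key inequality uses Cauchy--Schwarz on the walk functions $\phi,\psi$, not merely $\gamma_{2s+1}^2\le\gamma_{2s}\gamma_{2s+2}$. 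You have that functional structure available, so this step is fine, but your phrasing is misleading.

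The real gap is in the error analysis. Writing $A_{\Gamma[X]}=pJ+E$ and expanding each $\gamma$-run produces, for every $S$, many cross-terms, and your three cases do not cover them. The critical ones are the all-$E$ words when $S$ has several $g$-runs: for instance, with $|S|=4$ split into two runs one must bound $|\mathrm{tr}(B_g^{2}E^{b_1}B_g^{2}E^{b_2})|$ with $b_1+b_2=2k-3$. The Hilbert--Schmidt bound you quote, using only $\|B_g\|_{\mathrm{HS}}^2\le(1+\delta)p|X|^2$ and $\|B_g\|_{\mathrm{op}}\le(1+\delta)p|X|$, yields a relative error of order $\lambda^{2k-3}n/d^{2k-2}$, which under the hypothesis $\lambda^{2k-1}\le\eta d^{2k}/n$ is $\eta(d/\lambda)^2$ --- unbounded. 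To rescue this term one must instead use $\|B_g^{2}\|_{\mathrm{HS}}^2=\mathrm{tr}(B_g^4)\le 2^{-4}h_{C_4}(\Gamma)$ together with Lemma~\ref{lem: spectral argument}, and analogously $\mathrm{tr}(B_g^{2\ell})\le 2^{-2\ell}h_{C_{2\ell}}(\Gamma)$ for higher $|S|$; this does give the right bound, but it is a nontrivial refinement of your sketch, and the general case with $m\ge 3$ interleaved $g$-runs and $E$-blocks requires a systematic argument you have not supplied. The paper sidesteps this entire combinatorial explosion by opening only one edge.
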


Theorem~\ref{thm:common_injective_count} can easily be deduced by setting $\eta=\mu^{4k^2}\delta^{2k}/10^{2k}$ in the lemma above and the fact that there are at most
$2(2k+1)^2 \eta d^{2k+1}$
degenerate copies of $C_{2k+1}$ by Lemma~\ref{lem:degenerate}.

Throughout this section, we write $\gamma_X:=1_{E(\Gamma[X])}$ or even $\gamma=\gamma_X$ if $X$ is clear from the context.
Similary, let $g=g_X$ be the indicator of the edges in the subgraph $G$ of $\Gamma[X]$.

Let $J$ be an edge subset of $H$. For $f_1,f_2 \colon X^2 \to \RR$ and $x_1,\dots,x_{|V(H)|} \in X$, write
\begin{align*}
	\langle f_1,f_2 \rangle^J_H
	:= \prod_{ij \in J} f_1(x_i,x_j) \prod _{ij \in E(H) \setminus J} f_2(x_i,x_j).
\end{align*}
In fact, $t_H(f)=\EE \big[ \langle f,h \rangle^{E(H)}_H \big]$ for any $h$.
For $\alpha,\beta\in\RR$ we may expand $t_H(\alpha f_1+\beta f_2)$ to
\[
 t_H(\alpha f_1+\beta f_2) = \sum_{J\subseteq E(H)}\alpha^{|J|}\beta^{e(H)-|J|} \EE \left[\ang{f_1,f_2}_{H}^{J}\right].
\]
For brevity, write
\begin{align*}
	\mathcal{E} (H) :=  \left\lbrace J \subset E(H) \colon|J| \hbox{ is even} \right\rbrace
	\ \ \text{and}\ \ 	
	\mathcal{E}_+ (H) :=  \left\lbrace J \subset E(H)\colon |J| \hbox{ is even and nonzero} \right\rbrace.
\end{align*}
Let $f:=2g-\gamma$ so that $g=\tfrac{1}{2}(f+\gamma)$ and $\gamma-g=\tfrac{1}{2}(-f+\gamma)$. Since $0\leq g\leq \gamma$ we have $|f|\leq \gamma$. Moreover, from the definition of $f$ it follows that
\begin{equation}\label{eq:cancel}
	t_H(g)+t_H(\gamma-g)
= \bigg( \frac{1}{2} \bigg)^{e(H)-1} \bigg( t_H(\gamma)+\!\!\sum_{J \in \mathcal{E}_+(H)} \EE \big[ \ang{f, \gamma}_H^J \big] \bigg).
\end{equation}
Recall that \eqref{eq:oddcycle_final} implies
\begin{align}\label{eq:homdensity}
    t_{C_{2k+1}}(\gamma)\ge  p^{2k+1}\left(1- \frac{\lambda  }{\mu^{2k+1}d}-\frac{\lambda^{2k-1}n}{\mu^{2k+1}d^{2k}} -2k\delta\right).
\end{align}
Thus, in order to prove Lemma~\ref{lem:homcounting}, it suffices to show that $\EE \big[ \ang{f, \gamma}_{C_{2k+1}}^J \big]$ is `almost nonnegative'.
For that we generalise Sidorenko's arguments~\cite{Sid89} for proving the commonality of odd cycles.
For a symmetric function $f\colon X^2\to\RR$,
define a polynomial in $\RR[z]$
\[
Q_{H}(z;f):=\sum_{J\in\mathcal{E}_{+}(H)} \EE\big[\ang{f,z\gamma}_{H}^J\big]
=\sum_{J\in\mathcal{E}_{+}(H)} \EE\big[\ang{f,\gamma}_{H}^J\big] z^{e(H)-|J|}.
\]
\begin{lemma}\label{lem:diff}
Suppose $|f(x,y)|\le \gamma(x,y)$ for every $x,y\in X$. Then
\begin{align*}
    \left|\frac{d}{dz}Q_{C_{2k+1}}(z;f)-p(2k+1)Q_{P_{2k}}(z;f)\right|\leq (2k+1)p^{2k+1}\left(\frac{\lambda}{\mu^{2k+1}d}+\frac{\lambda^{2k-1}n}{\mu^{2k+1}d^{2k}}\right).
\end{align*}
\end{lemma}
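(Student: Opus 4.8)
The idea is to differentiate $Q_{C_{2k+1}}(z;f)$ term by term with respect to $z$ and group the resulting derivatives according to where the ``missing'' edge lies, so that each group assembles into a copy of $Q_{P_{2k}}(z;f)$ up to an error controlled by the Expander Mixing Lemma. Recall that
\[
Q_{C_{2k+1}}(z;f)=\sum_{J\in\mathcal{E}_+(C_{2k+1})}\EE\big[\ang{f,\gamma}^J_{C_{2k+1}}\big]\,z^{2k+1-|J|},
\]
so that $\tfrac{d}{dz}Q_{C_{2k+1}}(z;f)=\sum_{J\in\mathcal{E}_+}(2k+1-|J|)\EE\big[\ang{f,\gamma}^J_{C_{2k+1}}\big]z^{2k-|J|}$. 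The factor $2k+1-|J|$ is exactly the number of non-$J$ edges of $C_{2k+1}$, i.e.\ the number of edges carrying a $\gamma$-weight; so I would rewrite this as a sum over pairs $(J,e)$ with $J\in\mathcal{E}_+(C_{2k+1})$ and $e\in E(C_{2k+1})\setminus J$. For each such pair, deleting the edge $e$ turns $C_{2k+1}$ into a path $P_{2k}$ on the same vertex set, $J$ becomes an even nonempty edge subset of that path, and the monomial becomes $z^{2k-|J|}$ — precisely a term of $Q_{P_{2k}}(z;f)$ — except that the two endpoints of the deleted edge $e$ are still constrained to be adjacent in $\Gamma$ (they carry a factor $\gamma(x_a,x_b)$ under the expectation), whereas in $Q_{P_{2k}}$ the endpoints of the path range freely over $X$.

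First I would make this dictionary precise. Fix the cycle $C_{2k+1}$ on vertices $\{v_1,\dots,v_{2k+1}\}$ in cyclic order. By symmetry of the cycle under rotation, each of the $2k+1$ choices of deleted edge $e$ contributes the same quantity, so $\tfrac{d}{dz}Q_{C_{2k+1}}(z;f)=(2k+1)\,R(z;f)$ where $R(z;f)$ is the sum over even nonempty $J$ contained in the fixed path $P=C_{2k+1}-v_{2k+1}v_1$, of $z^{2k-|J|}\cdot\EE\big[\gamma(x_1,x_{2k+1})\prod_{ij\in J}f(x_i,x_j)\prod_{ij\in E(P)\setminus J}\gamma(x_i,x_j)\big]$, the $x_i$ being independent uniform vertices of $X$. (A small point: I should check that the event ``$|J|$ even nonzero'' survives edge deletion correctly — it does, because we only delete an edge not in $J$, so $|J|$ is unchanged, and $\mathcal{E}_+(C_{2k+1})$ restricted this way is exactly $\mathcal{E}_+(P_{2k})$; conversely every term of $Q_{P_{2k}}$ arises.) The difference $R(z;f)-p\,Q_{P_{2k}}(z;f)$ is then, for each $J\in\mathcal{E}_+(P_{2k})$ and each value of $z$, the coefficient $z^{2k-|J|}$ times $\EE\big[(\gamma(x_1,x_{2k+1})-p)\,\prod_{ij\in J}f(x_i,x_j)\prod_{ij\in E(P)\setminus J}\gamma(x_i,x_j)\big]$, since $Q_{P_{2k}}(z;f)$ has exactly the same structure but with the endpoints $x_1,x_{2k+1}$ unconstrained (an averaging over adjacency replaced by the density $p$).

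Next I would bound this ``defect'' expectation. Write it as $\tfrac{1}{|X|^{2k+1}}\sum_{x_1,x_{2k+1}}u(x_1)\big(\gamma(x_1,x_{2k+1})-p\big)v(x_{2k+1})$, where $u(x_1)$ and $v(x_{2k+1})$ are obtained by summing the product of the remaining $f$- and $\gamma$-factors over the internal vertices $x_2,\dots,x_{2k}$; crucially $|f|\le\gamma\le 1$ entry-wise, so $u$ and $v$ are bounded by the corresponding walk counts, $0\le u,v\le p^k|X|^k(1+o(1))$ after using $\delta$-almost-regularity, and in fact $u,v\le\deg$-products which I can bound crudely by $|X|^k$. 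The Expander Mixing Lemma (Lemma~\ref{lem:EML}) applied to $u/\|u\|_\infty$, $v/\|v\|_\infty$ gives $\big|\sum u(x_1)(\gamma-p)v(x_{2k+1})\big|\le \lambda\sqrt{\sum u^2\sum v^2}$, and crudely $\sum u^2\le |X|\cdot(p^k|X|^k)^2$, similarly for $v$, so the whole defect is at most $\lambda p^{2k}|X|^{2k+1}/|X|^{2k+1}=\lambda p^{2k}$ up to constants and lower-order terms — but I want the stated bound $p^{2k+1}(\tfrac{\lambda}{\mu^{2k+1}d}+\tfrac{\lambda^{2k-1}n}{\mu^{2k+1}d^{2k}})$, so I should be more careful: bound one of $\sum u^2$ or $\sum v^2$ by the number of closed even walks (giving the $\lambda^{2k-2}dn$ term via Lemma~\ref{lem: spectral argument}) rather than trivially, exactly as in the derivation of \eqref{eq:oddcycle_final}. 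Summing the per-$J$ bounds over the at most $2^{2k}$ sets $J\in\mathcal{E}_+(P_{2k})$ and over $z$ in the relevant monomial — wait, $z$ is a free variable, so actually the statement is a polynomial identity-with-error and I should read it as: for each fixed $z$, or really coefficient-wise. Cleanest is to prove the coefficient-wise bound $\big|[\text{coeff of }z^m]\big(\tfrac{d}{dz}Q_{C_{2k+1}}-p(2k+1)Q_{P_{2k}}\big)\big|\le(2k+1)p^{2k+1}(\cdots)$ and then note the lemma as stated follows for the evaluations used downstream; but to match the paper's phrasing I'd present it directly at the level of the functions, absorbing the finitely many monomial terms into the constant $2^{7k}$-type slack. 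Then reabsorb the $(1+o(1))$'s from almost-regularity into the $\delta$ already present (this is where the $\mu^{2k+1}$ in denominators comes from: $|X|\ge\mu n$).

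**Main obstacle.** The genuinely delicate point is the bookkeeping in the second paragraph: making sure that the differentiated cycle sum reassembles into $p$ times the path polynomial with the endpoints correctly freed, and that the ``even and nonzero'' constraint transfers without loss. The analytic part — the Expander Mixing Lemma estimate — is essentially a repeat of the computation already done for \eqref{eq:oddcycle_X} and \eqref{eq:oddcycle_final}, so I expect it to go through routinely once the combinatorial identity $\tfrac{d}{dz}Q_{C_{2k+1}}(z;f)=(2k+1)R(z;f)$ and the description of $R-p\,Q_{P_{2k}}$ are nailed down. A secondary nuisance is tracking the error terms carefully enough to land on $\tfrac{\lambda}{\mu^{2k+1}d}+\tfrac{\lambda^{2k-1}n}{\mu^{2k+1}d^{2k}}$ rather than something slightly worse; this is handled, as above, by applying Lemma~\ref{lem: spectral argument} to one of the two vertex-weight vectors instead of bounding both trivially.
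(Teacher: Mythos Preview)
Your overall strategy matches the paper's exactly: differentiate term by term, use the identity $2k+1-|J|=|\{e\in E(C_{2k+1}):e\notin J\}|$ to rewrite the derivative as a sum over pairs $(J,e)$ with $e\notin J$, delete $e$ to pass to $P_{2k}$, and then compare each $\EE\big[\ang{f,\gamma}_{C_{2k+1}}^J\big]$ with $p\,\EE\big[\ang{f,\gamma}_{P_{2k}}^J\big]$ via the Expander Mixing Lemma.

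The gap is in how you set up the Expander Mixing Lemma. You write the defect as
\[
\frac{1}{|X|^{2k+1}}\sum_{x_1,x_{2k+1}} u(x_1)\big(\gamma(x_1,x_{2k+1})-p\big)v(x_{2k+1}),
\]
with $u,v$ obtained by ``summing over the internal vertices $x_2,\dots,x_{2k}$''. But that sum is a single function $W(x_1,x_{2k+1})$ of \emph{both} endpoints and does not factor as $u(x_1)v(x_{2k+1})$: the path connects $x_1$ to $x_{2k+1}$ through those internal vertices. Lemma~\ref{lem:EML} needs rank-one weights, so as written this step fails, and your fallback bound $\sum u^2\le |X|(p^k|X|^k)^2$ is (as you noticed) too weak by a factor of order $n$.

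The fix --- and this is precisely the mechanism behind the derivation of~\eqref{eq:oddcycle_estimate} that you cite as your model --- is to condition on the \emph{middle} vertex $x_{k+1}=z$ before summing. The path then splits into two halves meeting only at $z$, yielding genuine rank-one weights $\ell_z(x_1)$, $r_z(x_{2k+1})$; Lemma~\ref{lem:EML} applies for each fixed $z$ with error
\[
\lambda\sqrt{\textstyle\sum_x \ell_z(x)^2}\sqrt{\textstyle\sum_y r_z(y)^2}\ \le\ \lambda\, h_{C_{2k}}(\Gamma;z),
\]
since $|f|\le\gamma\le 1_\Gamma$ bounds each $|\ell_z(x)|,|r_z(y)|$ by the $k$-walk count $w_{k,\Gamma}(z,\cdot)$. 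Summing over $z$ and invoking Lemma~\ref{lem: spectral argument} produces exactly the two terms $\lambda d^{2k}$ and $\lambda^{2k-1}dn$, which after dividing by $|X|^{2k+1}\ge(\mu n)^{2k+1}$ become the stated bound. The equal split at the midpoint is what makes both squared-norm sums controlled by the \emph{same} closed-walk count $h_{C_{2k}}(\Gamma;z)$, so that Lemma~\ref{lem: spectral argument} applies cleanly.

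Your side remark about the extra factor $|\E_+(P_{2k})|\le 2^{2k}$ from summing over $J$ (for $|z|\le 1$) is legitimate; the paper glosses over it when it says ``substituting this into~\eqref{eq:diff} yields the desired bound'', but the slack is harmlessly absorbed into the $2^{7k}$ constant of Lemma~\ref{lem:homcounting}.
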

\begin{proof}
Since $\{\E_+(C_{2k+1}\setminus e)\}_{e\in E(C_{2k+1})}$ covers each $J\in \E_+(C_{2k+1})$ exactly $2k+1-|J|$ times,
\begin{align}\label{eq:diff}
  \nonumber  \frac{d}{dz}Q_{C_{2k+1}}(z;f)&=\sum_{J\in\mathcal{E}_{+}(H)} \EE\left[\ang{f,\gamma}_{C_{2k+1}}^J\right] (2k+1-|J|)z^{2k-|J|}\\
    &=\sum_{e\in E(C_{2k+1})}\sum_{J\in \E_+(C_{2k+1}\setminus e)} \EE\left[\ang{f,\gamma}_{C_{2k+1}}^J\right] z^{2k-|J|}.
\end{align}
As $C_{2k+1}\setminus e$ is always isomorphic to $P_{2k}$, we regard $J$ as a subgraph of $P_{2k}$ on $[2k+1]$ with edges $\{i,i+1\}$, $i=1,2,\dots,2k$.
Let $L$ and $R$ be the edges in $P_{2k}$ induced on vertices $\{1,2,\dots,k+1\}$ and $\{k+1,\dots,2k+1\}$.
For each $z\in X$, let $\ell_z,r_z \colon V(\Gamma) \to \RR$ be
\begin{align*}
	\ell_z(x)& :=\sum_{\substack{x_{k+1}=z, x_{1}=x,\\ x_i\in X, 1<i\leq k}}\prod_{ij\in L\cap J}f(x_i,x_j)\prod_{ij\in L\setminus J}\gamma(x_i,x_j)\\
~\text{ and }~	r_z(x) &:= \sum_{\substack{x_{k+1}=z, x_{2k+1}=x,\\ x_i\in X, k+2\leq i< 2k+1}}\prod_{ij\in R\cap J}f(x_i,x_j)\prod_{ij\in R\setminus J}\gamma(x_i,x_j).
\end{align*}
Now the Expander Mixing Lemma together with the fact $|f|\leq \gamma\leq 1_\Gamma$ gives
\begin{align*}
	\left| \sum_{x,y \in V(\Gamma) }\ell_z(x) 1_\Gamma(x,y) r_z(y) - p \sum_{x\in V(\Gamma)}\ell_z(x)  \sum_{y \in V(\Gamma)} r_z(y) \right|
\leq \lambda \cdot h_{C_{2k}}(\Gamma;z).
\end{align*}
Since $$\sum_{x,y \in V(\Gamma) }\ell_z(x) 1_\Gamma(x,y) r_z(y)=|X|^{2k}\EE\left[\ang{f,\gamma}_{C_{2k+1}}^J\middle|x_{k+1}=z\right]$$
and
$$\sum_{x\in V(\Gamma)}\ell_z(x)  \sum_{y \in V(\Gamma)} r_z(y)=|X|^{2k}\EE\left[\ang{f,\gamma}_{P_{2k}}^J\middle|x_{k+1}=z\right],$$
Lemma~\ref{lem: spectral argument} gives
\begin{align*}
    \left|\EE\left[\ang{f,\gamma}_{C_{2k+1}}^J\right]-p\cdot \EE\left[\ang{f,\gamma}_{P_{2k}}^J\right]\right|
    &\leq \frac{1}{|X|^{2k+1}}\left(\lambda p^{2k}n^{2k}
    +\lambda^{2k-1}pn^2\right)\\
    &\leq p^{2k+1}\left(\frac{\lambda}{\mu^{2k+1}d}+\frac{\lambda^{2k-1}n}{\mu^{2k+1}d^{2k}}\right).
\end{align*}
Substituting this into \eqref{eq:diff} yields the desired bound.
\end{proof}

Lemma~\ref{lem:diff} roughly means $\frac{d}{dz}Q_{C_{2k+1}}(z;f)\approx p(2k+1)Q_{P_{2k}}(z;f)$ and the next lemma proves $Q_{P_{2k}}(z;f)$ is `almost nonnegative', which will immediately prove that $Q_{C_{2k+1}}(1;f)$ is almost nonnegative too, as planned.

\begin{lemma}\label{lem:better_sidorenko_lemma}
Let $0\le z\le 1$ and let $\gamma=1_{E(\Gamma[X])}$ for a $\delta$-almost-regular set $X$. Suppose $f\colon X^2\to [0,1]$ satisfies $|f(x,y)|\leq \gamma(x,y)$ for all $x,y\in X^2$. Then
\begin{align*}
	\EE\bigg[\sum_{J\in\mathcal{E}_+(C_{2k})} \langle f,z\gamma \rangle_{P_{2k}}^J \bigg]\ge - p^{2k} 2^{5k}\delta.
\end{align*}
\end{lemma}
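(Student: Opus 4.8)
The plan is to mimic Sidorenko's original proof that $C_{2k+1}$ is common, but carried out for the path $P_{2k}$ (with both endpoints pinned) rather than the cycle, and keeping careful track of the error introduced by $\delta$-almost-regularity. Recall that $\sum_{J \in \mathcal{E}_+(C_{2k})} \langle f, z\gamma \rangle_{P_{2k}}^J$ equals $\tfrac12\big[(z\gamma + f)_{P_{2k}}^{\text{prod}} + (z\gamma - f)_{P_{2k}}^{\text{prod}}\big] - z^{2k}\prod \gamma$, where I abbreviate by $(\cdot)_{P_{2k}}^{\text{prod}}$ the product $\prod_{i=1}^{2k}(\cdot)(x_i,x_{i+1})$ over the $2k$ edges of $P_{2k}$ on $[2k+1]$. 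Write $a := z\gamma + f$ and $b := z\gamma - f$; since $0 \le z \le 1$ and $|f| \le \gamma$, both $a$ and $b$ are nonnegative and bounded by $2\gamma \le 2\cdot 1_\Gamma$. Taking expectations, what we must lower bound is
\[
\tfrac12\,\EE\!\Big[\prod_{i=1}^{2k} a(x_i,x_{i+1})\Big] + \tfrac12\,\EE\!\Big[\prod_{i=1}^{2k} b(x_i,x_{i+1})\Big] - z^{2k}\,\EE\!\Big[\prod_{i=1}^{2k}\gamma(x_i,x_{i+1})\Big].
\]

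First I would compute the last (``main'') term. Since $\gamma = 1_{E(\Gamma[X])}$ and $X$ is $\delta$-almost-regular, $\EE_{y\in X}[\gamma(x,y)] = \deg_{\Gamma[X]}(x)/n = (1\pm\delta)p|X|/n$ for $x \in X$, so by iterating over the $2k$ edges of the path one gets $\EE[\prod \gamma(x_i,x_{i+1})] = (1\pm k\delta)^{?}\,p^{2k}(|X|/n)^{2k+1}$; a clean way is to note $\EE[\prod\gamma] \le p^{2k}(|X|/n)^{2k+1}(1+\delta)^{2k}$ directly, which after absorbing is at most $p^{2k}(1 + 2^{2k}\delta)$ for the normalisation used here (recall $t_H$ for subgraphs of $\Gamma[X]$ is normalised by $|X|^{|V(H)|}$, so the $(|X|/n)$ factors behave correctly). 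The heart of the matter is the two nonnegative-contribution terms. For a nonnegative symmetric kernel $h$ on $X$, define the ``path counting'' operation: $\EE[\prod_{i=1}^{2k} h(x_i,x_{i+1})]$ over independent $x_1,\dots,x_{2k+1}$. The key algebraic fact, exactly as in Sidorenko, is that this quantity equals $\frac{1}{|X|^{2k+1}}\sum_{x,y}\big(\mathsf{H}^k\big)(x,y)$ where $\mathsf{H}$ is the $|X|\times|X|$ matrix with entries $h(x,y)$; and since $\mathsf{H}$ is symmetric PSD-like under squaring, $\sum_{x,y}(\mathsf H^{2m})(x,y) \ge \frac{1}{|X|}\big(\sum_{x,y}(\mathsf H^{m})(x,y)\big)^2$ by Cauchy--Schwarz, letting us pass from a $2k$-edge path to shorter ones. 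Iterating the Cauchy--Schwarz / convexity step $\log_2(2k)$-many times reduces everything to single-edge sums $\sum_{x,y} h(x,y) = |X|^2\,\EE[h]$. Applying this to $h=a$ and $h=b$ and using $\EE[a] = z\,\EE[\gamma] + \EE[f] \ge z p(|X|/n) - \text{(small)}$ and $\EE[a]+\EE[b] = 2z\,\EE[\gamma]$, together with the AM–GM/convexity bound $\tfrac12 A^{2k} + \tfrac12 B^{2k} \ge \big(\tfrac{A+B}{2}\big)^{2k}$ for the halved path counts, yields $\tfrac12\EE[\prod a] + \tfrac12\EE[\prod b] \ge z^{2k}\EE[\prod\gamma] \cdot (1 - O(2^{k}\delta))$, which exactly cancels the main term up to the advertised $-p^{2k}2^{5k}\delta$ error.

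The step I expect to be the main obstacle is \textbf{tracking the $\delta$-almost-regularity error through the iterated Cauchy--Schwarz reductions}: each application of the convexity inequality $\sum_{x,y}(\mathsf H^{2m})(x,y)\cdot|X| \ge (\sum_{x,y}(\mathsf H^m)(x,y))^2$ is exact, but the almost-regularity only gives us control of $\sum_{x,y}h(x,y)$ and of the ``degree sequence'' $\sum_y h(x,y)$ up to multiplicative $(1\pm\delta)$, and these relative errors can in principle compound multiplicatively over the $\approx\log_2(2k)$ halving steps and the $2k$ edges, which is precisely why the final bound carries a factor like $2^{5k}$ rather than $O(k\delta)$. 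The care needed is to always push the error to first order — write each relevant quantity as (ideal value)$\cdot(1\pm \text{small})$, expand, and bound crudely $(1\pm\delta)^{O(k)} \le 1 + 2^{O(k)}\delta$ — and to make sure the ``$f$-part'' of $\EE[a]$ and $\EE[b]$, which is not sign-controlled, only ever appears in the combination $\EE[a]+\EE[b]=2z\EE[\gamma]$ after the convexity step, so that it never needs a lower bound on its own. Once that bookkeeping is set up, no individual estimate is hard: it is the standard spectral-free convexity argument of Sidorenko plus uniform control from almost-regularity.
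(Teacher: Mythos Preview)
There is one real error in your argument: the claim that $a=z\gamma+f$ and $b=z\gamma-f$ are pointwise nonnegative is false for $z<1$ (at an edge where $f=\gamma$ you get $b=(z-1)\gamma<0$), and the lemma is applied for all $z\in[0,1]$. Your iterated Cauchy--Schwarz reduction relies on this nonnegativity: one halving step $t_{P_{2m}}(h)\ge t_{P_m}(h)^2$ holds for any symmetric $h$, but from an odd-length path you cannot continue without $h\ge0$. Fortunately the endpoint you actually need, $t_{P_{2k}}(h)\ge(\EE h)^{2k}$, is true for \emph{every} symmetric $h$ directly, with no sign hypothesis: diagonalising the matrix $H$ of $h$ with eigenvalues $\mu_i$ and letting $c_i\ge0$, $\sum_i c_i=1$, be the squared components of the normalised all-ones vector in the eigenbasis, the inequality reads $\sum_i\mu_i^{2k}c_i\ge(\sum_i\mu_i c_i)^{2k}$, which is Jensen for the convex map $x\mapsto x^{2k}$. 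With this one-line replacement your chain
\[
\tfrac12\, t_{P_{2k}}(a)+\tfrac12\, t_{P_{2k}}(b)\ \ge\ \tfrac12(\EE a)^{2k}+\tfrac12(\EE b)^{2k}\ \ge\ \Big(\tfrac{\EE a+\EE b}{2}\Big)^{2k}=(z\,\EE\gamma)^{2k}
\]
is valid for all $z$, and since $\EE\gamma\ge(1-\delta)p$ while $t_{P_{2k}}(\gamma)\le((1+\delta)p)^{2k}$, the residual $z^{2k}\big((\EE\gamma)^{2k}-t_{P_{2k}}(\gamma)\big)$ is at least $-p^{2k}\big((1+\delta)^{2k}-(1-\delta)^{2k}\big)\ge-2^{2k+1}p^{2k}\delta$, well within the stated $2^{5k}$ constant.

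Once repaired, your route is genuinely different from, and shorter than, the paper's. The paper never uses the rewriting $\tfrac12\,t_{P_{2k}}(z\gamma+f)+\tfrac12\,t_{P_{2k}}(z\gamma-f)-z^{2k}t_{P_{2k}}(\gamma)$; instead it classifies each $J\in\E_+(P_{2k})$ by the positions $i,j$ of its leftmost and rightmost edges, uses almost-regularity on the portion of the path outside $[i,j+1]$ to reduce to quantities $T_m$ depending only on $m=j-i+1$, proves the identity $\sum_i(2k+1-i)T_i=\sum_\ell(k+1-\ell)(T_{2\ell}+2T_{2\ell-1}+T_{2\ell-2})$, and then shows each bracket is nonnegative by splitting the path at its middle vertex and applying AM--GM. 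Your argument bypasses all of this combinatorial bookkeeping; the paper's, by contrast, never invokes the signed Blakley--Roy inequality and stays closer to Sidorenko's original treatment of the commonality of odd cycles.
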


\begin{proof}
We firstly classify the nonempty edge subsets of $E(P_{2k})$ 
in terms of the first and the last edge in $J$. 
Namely, for nonempty $J\subseteq E(P_{2k})$, let $a_J$ be the smallest $i$ such that $\{i,i+1\}\in J$ and let $b_J$ be the largest $j$ such that $\{j,j+1\}\in J$. Define
\begin{align*}
\C_{i,j} := \lbrace J \subseteq E(P_{2k}) \colon J\neq\emptyset, a_J=i,\text{ and } b_J=j \rbrace.
\end{align*}
and let
\begin{align*}
    S_{i,j}
    :=\EE\bigg[\sum_{J \in \mathcal{C}_{i,j} \cap \E_+(P_{2k})}\langle f,z\gamma \rangle_{P_{2k}}^J \bigg].
\end{align*}
We regard $J\in\C_{i,j}$ as a subset of $E(P_{i,j})$, where $P_{i,j}$ is the path on $\{i,\dots,j+1\}$. 
Then
\begin{align*}
S_{i,j}
&=\EE\bigg[\sum_{J \in \mathcal{C}_{i,j} \cap \E_+(P_{2k})}\ang{f,z\gamma}_{P_{2k}}^J \bigg]\\
&=\sum_{J \in \mathcal{C}_{i,j} \cap \E_+(P_{2k})}\EE\left[\EE\left[\ang{f,z\gamma} _{P_{2k}}^J \middle| x_\ell:i\leq \ell\leq j+1\right] \right]\\
&=\sum_{J \in \mathcal{C}_{i,j} \cap \E_+(P_{2k})}
(1\pm 2k\delta)(pz)^{2k+i-j-1}\EE\left[\ang{f,z\gamma}_{P_{i,j}}^J\right].
\end{align*}
For $m=j-i+1$, let $$T_m:=\sum_{J \in \mathcal{C}_{i,j} \cap \E_+(P_{2k})}
(pz)^{2k+i-j-1}\EE\left[\ang{f,z\gamma}_{P_{i,j}}^J\right].$$
This is well-defined because the right-hand side above only depends on $j-i$.
Let $T_0=T_1=0$ for notational convenience.
Since $|f|\leq \gamma$ and $0\leq z \leq 1$,
\begin{align*}
\left|S_{i,j}
-T_{j-i+1}\right|
&\leq 2k\delta(pz)^{2k+i-j-1} \sum_{J \in \mathcal{C}_{i,j} \cap \E_+(P_{2k})}\EE\left[\ang{f,z\gamma}_{P_{i,j}}^J\right]\\
&\leq 2k\delta(pz)^{2k+i-j-1} \sum_{J \in \mathcal{C}_{i,j} \cap \E_+(P_{2k})}\EE\left[\ang{\gamma,z\gamma}_{P_{i,j}}^J\right]\\
&\leq 2k\delta(pz)^{2k+i-j-1} \cdot \big|\mathcal{C}_{i,j} \cap \E_+(P_{2k})\big|\cdot t_{P_{i,j}}(\gamma)\\
&\leq 2^{3k}p^{2k}\delta,
\end{align*}
where the last inequality used $t_{P_{i,j}}(\gamma)\leq(1+2k\delta)p^{j-i+1}$.
As $\{\C_{i,j}\cap \E_+(P_{2k}):1\leq i<j\leq 2k\}$ is a partition of $\E_+(P_{2k})$,
\begin{align*}
\Bigg|\EE\Bigg[\sum_{J \in \mathcal{E}_+ (P_{2k})} \ang{f,z\gamma}_{P_{2k}}^J\Bigg]
-\sum_{i=1}^{2k}(2k+1-i)T_i\Bigg|
\leq \sum_{1\leq i<j\leq 2k}\left|S_{i,j}-T_{j-i+1}\right|
\leq 2^{5k}p^{2k}\delta.
\end{align*}
Our final goal is to prove that $\sum_{i=1}^{2k}(2k+1-i)T_i$ is nonnegative for $0<z\leq 1$,
which suffices to conclude the proof.
We split this sum with respect to the parity to express it with the sum of nonnegative terms.
Namely,
\begin{align*}
\sum_{i=1}^{2k}(2k+1-i)T_i &= \sum_{i=1}^{k} (2k+1-2i)T_{2i} + (2k+2-2i) T_{2i-1}\\
   & =  \sum_{i=1}^{k} (2k+2-2i) T_{2i-1} + (k+1-i) T_{2i} + \sum_{i=0}^{k-1} (k-i)T_{2i}\\
   & = \sum_{i=1}^{k} 2(k+1-i) T_{2i-1} + (k+1-i) T_{2i} + \sum_{i=1}^{k} (k+1-i)T_{2i-2}\\
   & =  \sum_{i = 1}^k (k+1-i)(T_{2i} + 2 T_{2i-1} + T_{2i-2}).
\end{align*}
We claim  $T_{2\ell} + 2 T_{2\ell-1} + T_{2\ell-2}\geq 0$ for each $\ell=1,2,\dots,k$.
For that we may assume that $k=\ell$, as reducing $k$ by 1 only multiplies a factor of~$1/(pz)^2$ to each $T_{i}$ and hence, we may shorten the path $P_{2k}$ as long as it contains $P_{i,j}$. 
Let $L:=\{\{j,j+1\}:j=1,2,\dots,i\}$ and $R:=E(P_{2\ell})\setminus L$ for brevity,
that is, $L$ and $R$ are the left and the right half of the $2\ell$-edge path, respectively.
Let
\begin{align*}
\FF_{0}&:=\{J\in \C_{1,2\ell}:|J\cap L|\text{ and }|J\cap R|\text{ are even}\},\\
\G_{0}&:=\{J\in \C_{1,2\ell-1}:|J\cap L|\text{ and }|J\cap R|\text{ are even}\},\\
\text{ and }\quad\HH_{0}&:=\{J\in \C_{2,2\ell-1}:|J\cap L|\text{ and }|J\cap R|\text{ are even}\}. \end{align*}
Set $\FF_{0}^L:=\{J\cap L:J\in\FF_0\}$ and $\FF_{0}^R:=\{J\cap R:J\in\FF_0\}$.
Then $J\in \FF_0$ if and only if there exists $J_1\in \FF_0^L$ and $J_2\in \FF_0^R$ such that $J_1\cup J_2=J$. Let $h:=z\gamma$ and note
\begin{align*}
\sum_{J\in\FF_0}\ang{f,h}_{P_{2\ell}}^J
=\bigg(\sum_{J_1\in\FF_0^L}\ang{f,h}_{L}^{J_1}\bigg)\bigg(\sum_{J_2\in\FF_0^R}\ang{f,h}_{R}^{J_2}\bigg).
\end{align*}
For similarly defined $\G_0^L,\G_0^R,\HH_0^L$, and $\HH_0^R$ we  have the analogous identities.
Therefore,
\begin{align*}
\EE\bigg[\sum_{J\in\G_0}\ang{f,h}_{P_{2\ell}}^{J}\bigg]
&=\EE\bigg[\bigg(\sum_{J_1\in\G_0^L}\ang{f,h}_{L}^{J_1}\bigg)\bigg(\sum_{J_2\in\G_0^R}\ang{f,h}_{R}^{J_2}\bigg)\bigg]\\
&=\EE_x\bigg[\EE\bigg[\bigg(\sum_{J_1\in\G_0^L}\ang{f,h}_{L}^{J_1}\bigg)\bigg(\sum_{J_2\in\G_0^R}\ang{f,h}_{R}^{J_2}\bigg) \biggm\vert x_{\ell+1}=x\bigg]\bigg]\\
&=\EE_x\bigg[\EE\bigg[\sum_{J_1\in\G_0^L}\ang{f,h}_{L}^{J_1}\biggm\vert x_{\ell+1}=x\bigg]\EE\bigg[\sum_{J_2\in\G_0^R}\ang{f,h}_{R}^{J_2} \biggm\vert x_{\ell+1}=x\bigg]\bigg],
\end{align*}
where the last equality follows from the conditional independence of variables $\ang{f,h}_L^{J_1}$ 
and~$\ang{f,h}_R^{J_2}$ given $x_{\ell+1}=x$.
The key observation is that
\[
\FF_0^L=\G_0^L\qquad\text{ and }\qquad\G_0^R=\HH_0^R.
\]
Let $\phi(x):=\EE\big[\sum_{J_1\in\FF_0^L}\ang{f,h}_{L}^{J_1}\bigm\vert x_{\ell+1}=x\big]$
and $\psi(x):=\EE\big[\sum_{J_2\in\HH_0^R}\ang{f,h}_{R}^{J_2}\bigm\vert x_{\ell+1}=x\big]$
for brevity.
Then by the AM--GM inequality,
\begin{align}\label{eq:AM-GM}
\bigg|\EE\bigg[\sum_{J\in\G_0}\ang{f,h}_{P_{2\ell}}^{J}\bigg]\bigg|
\leq\EE_x\left[\big|\phi(x)\psi(x)\big|\right]
\leq \frac{1}{2}\left(\EE_x\left[\phi(x)^2\right]
+
\EE_x\left[\psi(x)^2\right]\right).
\end{align}
By the symmetry that maps the vertex $i$ to $2\ell+2-i$, we obtain
\[
\phi(x)=
\EE\bigg[\sum_{J_2\in\FF_0^R}\ang{f,h}_{R}^{J_2}\biggm\vert x_{\ell+1}=x\bigg]
\quad\text{ and }\quad
\psi(x)
=
\EE\bigg[\sum_{J_1\in\HH_0^L}\ang{f,h}_{L}^{J_1}\biggm\vert x_{\ell+1}=x\bigg],
\]
which implies $\EE\big[\sum_{J\in\FF_0}\ang{f,h}_{P_{2\ell}}^{J}\big]=\EE[\phi(x)^2]$
and $\EE\big[\sum_{J\in\HH_0}\ang{f,h}_{P_{2\ell}}^{J}\big]=\EE[\psi(x)^2]$, and thus,
\[
2\cdot\bigg|\EE\bigg[\sum_{J\in\G_0}\ang{f,h}_{P_{2\ell}}^{J}\bigg]\bigg|
\overset{\eqref{eq:AM-GM}}{\leq} \EE\bigg[\sum_{J\in\FF_0}\ang{f,h}_{P_{2\ell}}^{J}\bigg]
+\EE\bigg[\sum_{J\in\HH_0}\ang{f,h}_{P_{2\ell}}^{J}\bigg].
\]
We may do the same with $\FF_1,\G_1$, and $\HH_1$ defined by the odd intersections with two halves~$L$ and~$R$ to obtain
\[
2\cdot\bigg|\EE\bigg[\sum_{J\in\G_1}\ang{f,h}_{P_{2\ell}}^{J}\bigg]\bigg|
\leq \EE\bigg[\sum_{J\in\FF_1}\ang{f,h}_{P_{2\ell}}^{J}\bigg]
+\EE\bigg[\sum_{J\in\HH_1}\ang{f,h}_{P_{2\ell}}^{J}\bigg].
\]
Since $\FF_0\cup\FF_1$, $\G_0\cup\G_1$, and $\HH_0\cup\HH_1$ are partitions of $\C_{1,2\ell}$, $\C_{1,2\ell-1}$, and $\C_{2,2\ell-1}$, respectively,
we conclude that $T_{2\ell}+2T_{2\ell+1}+T_{2\ell-2}$ is nonnegative,
as claimed.
\end{proof}

Finally, we are ready to prove Lemma~\ref{lem:homcounting}.
\begin{proof}[Proof of Lemma~\ref{lem:homcounting}]
By Lemma~\ref{lem:diff} and~\ref{lem:better_sidorenko_lemma},
\begin{align*}
Q_{C_{2k+1}}(1;f)
&\geq (2k+1)\left(\int_{0}^1 p\cdot Q_{P_{2k}}(z;f) dz
-p^{2k+1}\left(\frac{\lambda}{\mu^{2k+1}d}+\frac{\lambda^{2k-1}n}{\mu^{2k+1}d^{2k}}\right)\right)\\
&\geq -(2k+1)p^{2k+1}\left(2^{5k}\delta
+\frac{\lambda}{\mu^{2k+1}d}+\frac{\lambda^{2k-1}n}{\mu^{2k+1}d^{2k}}\right)
\end{align*}
Thus, \eqref{eq:cancel} with $H=C_{2k+1}$ and \eqref{eq:homdensity} give
\begin{align*}
    t_{C_{2k+1}}(g)+t_{C_{2k+1}}(\gamma-g)
    &\geq \frac{1}{2^{2k}}  \left(p^{2k+1}\left(1- \frac{\lambda  }{\mu^{2k+1}d}-\frac{\lambda^{2k-1}n}{\mu^{2k+1}d^{2k}} -2k\delta\right)+
    Q_{C_{2k+1}}(1;f)\right)\\
    &\geq
     \frac{1}{2^{2k}}  p^{2k+1}\left(1- 2^{7k}\left(\frac{\lambda  }{\mu^{2k+1}d}+\frac{\lambda^{2k-1}n}{\mu^{2k+1}d^{2k}} +\delta\right)\right),
\end{align*}
as desired.
\end{proof}

\section{The relative Erd{\H{o}}s--Stone theorem for odd cycles}
To deduce Erd\H{o}s--Stone theorem from commonality, it is crucial to `regularise' the degree of the given subgraph $G$ of $\Gamma$ by restricting it to a vertex subset $X\subseteq V(\Gamma)$.
To this end, we employ an analogous argument to the proofs 
from~\cite{AHS14}*{Lemmas~4 and~6}.

\begin{lemma}\label{lem:regular}
For each $\varrho,\alpha> 0$ and $0 < \varepsilon < \alpha$, there exist $\eta > 0$ such that the following holds:
let $\Gamma$ be an $(n,d,\lambda)$-graph, with $\lambda \le \eta p^{1+\varrho} n$ and let $G \subset \Gamma$ be a subgraph satisying $e(G) \ge \alpha e(\Gamma)$. Then, there exists a set $X\subseteq V(G)$ such that
\begin{enumerate}[label=\rmlabel]
	\item\label{it:1}
	$|X| \ge \sqrt{ \varepsilon}n/8$,
	\item\label{it:2}
	$\deg_{G[X]}(x) \ge (\alpha - \varepsilon) p |X|$, and
	\item\label{it:3}
	$\deg_{\Gamma[X]}(x) = (1 \pm \varepsilon) p |X|$.
\end{enumerate}
\end{lemma}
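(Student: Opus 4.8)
I would prove the lemma by a vertex‑deletion (cleaning) argument carried out in two stages: first I regularise the $G$‑degrees, and then I pass to a further subset on which $\Gamma$ is almost regular. Throughout, the Expander Mixing Lemma (Lemma~\ref{lem:EML}) is the only pseudorandomness input, used both to estimate $e(\Gamma[Y])$ for subsets $Y$ and to control how many vertices of a given subset have the ``wrong'' $\Gamma$‑degree.

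\textbf{Stage 1 (the $G$‑degrees).} Starting from $V(\Gamma)$, repeatedly delete from the current set $Y$ any vertex $x$ with $\deg_{G[Y]}(x) < (\alpha-\tfrac{\varepsilon}{2})\,p|Y|$, stopping once no such vertex remains; call the resulting set $X_1$. Each deletion destroys fewer than $(\alpha-\tfrac{\varepsilon}{2})p|Y|$ edges of $G$, so if $X_1$ had size about $\sqrt{\varepsilon}\,n/8$ we could bound $e(G[X_1])$ from below by $e(G)$ minus $\sum_Y(\alpha-\tfrac{\varepsilon}{2})p|Y|$ over the deletion steps; the standard Erdős–Stone computation, together with $e(G)\ge\alpha\,e(\Gamma)=\alpha\,p\binom{n}{2}$, turns this into $e(G[X_1])\gg \varepsilon p n^2$. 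On the other hand $e(G[X_1])\le e(\Gamma[X_1])\le\tfrac12\big(p|X_1|^2+\lambda|X_1|\big)$ by Lemma~\ref{lem:EML}, and since $\lambda\le\eta p^{1+\varrho}n\le\eta pn$ the term $\lambda|X_1|$ is negligible once $\eta$ is small; comparing the two estimates forces $|X_1|\gg\sqrt{\varepsilon}\,n$, a contradiction. Hence $|X_1|\ge\sqrt{\varepsilon}\,n/4$, say, and by construction $\deg_{G[X_1]}(x)\ge(\alpha-\tfrac{\varepsilon}{2})p|X_1|$ for every $x\in X_1$. This is the routine part, being the usual dense Erdős–Stone deletion argument with $e(\Gamma[Y])$ estimated spectrally rather than by $\binom{|Y|}{2}$.

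\textbf{Stage 2 (the $\Gamma$‑degrees).} Inside $X_1$ I delete the vertices whose $\Gamma$‑degree is off target. For any set $Z\subseteq V(\Gamma)$ the Expander Mixing Lemma applied to $Z$ together with the set $W$ of vertices $x\in Z$ having $\deg_{\Gamma[Z]}(x)\notin(1\pm\tfrac{\varepsilon}{2})p|Z|$ yields, after summing degrees over $W$, a bound of the shape $|W|\le O\!\big(\lambda^2/(\varepsilon^2p^2|Z|)\big)$; for $|Z|\gtrsim\sqrt{\varepsilon}\,n$ and $\lambda\le\eta p^{1+\varrho}n$ this is at most $O\!\big(\eta^2 p^{2\varrho}n/\varepsilon^{5/2}\big)$, i.e.\ a tiny fraction of $|Z|$ when $\eta$ is small. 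Deleting these irregular vertices from $X_1$ and iterating — each round keeps the size linear in $n$, so the bound persists — I arrive at a set $X\subseteq X_1$ with $\deg_{\Gamma[X]}(x)=(1\pm\tfrac{\varepsilon}{2})p|X|$ for all $x$. Since the total number of vertices removed in Stage 2 is $o(|X_1|)$, in particular at most $\tfrac{\varepsilon}{8}\,p|X|$, we still have $|X|\ge\sqrt{\varepsilon}\,n/8$, and for $x\in X$
\[
\deg_{G[X]}(x)\ \ge\ \deg_{G[X_1]}(x)-|X_1\setminus X|\ \ge\ (\alpha-\varepsilon)p|X|,\qquad \deg_{\Gamma[X]}(x)=(1\pm\varepsilon)p|X|,
\]
which gives \ref{it:1}, \ref{it:2} and \ref{it:3}.

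\textbf{Main obstacle.} The delicate point is Stage~2: one must show the $\Gamma$‑regularisation terminates after discarding only a negligible fraction of $X_1$. Deleting an irregular vertex (of too large or too small $\Gamma$‑degree) can in principle create new irregular vertices, and the deletions of vertices of \emph{large} $\Gamma$‑degree are comparatively expensive in terms of $G$‑edges lost, so one has to rule out cascading and bound the cumulative number of such deletions. This is exactly the place where the polynomial slack $\varrho>0$ in the hypothesis $\lambda\le\eta p^{1+\varrho}n$ (rather than merely $\lambda\ll d$) is needed, and the bookkeeping is carried out by an argument analogous to~\cite{AHS14}*{Lemmas~4 and~6}.
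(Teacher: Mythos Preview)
Your two-stage strategy matches the paper's: Stage~1 is exactly Lemma~\ref{lem: part of lemma 6} (quoted from~\cite{AHS14}), and Stage~2 is an iterative cleaning of $\Gamma$-degrees. But the specific iteration you describe in Stage~2 --- delete all $\Gamma$-irregular vertices relative to the \emph{current} set, then repeat --- does not deliver what you need, and the sentence ``the total number of vertices removed in Stage~2 is $o(|X_1|)$, in particular at most $\tfrac{\varepsilon}{8}p|X|$'' is where the gap lies. The quantity $\tfrac{\varepsilon}{8}p|X|$ is of order $pn$, not $n$, and your per-round bound $|W|=O(\eta^2 p^{2\varrho}n/\varepsilon^{5/2})$ is \emph{not} below $\varepsilon p n$ when $\varrho<\tfrac12$ (nor is there any control on the number of rounds). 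More to the point, conditions~\ref{it:2} and~\ref{it:3} require that every surviving vertex lost at most $O(\varepsilon p|X_1|)$ \emph{neighbours} to the deleted set; bounding the total size of the deleted set is neither sufficient nor the right target.

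The paper's fix (following~\cite{AHS14}) is to change the iteration so that it directly controls edge-loss per vertex rather than vertices deleted: after removing the initial irregular set $Y_0$, define $Y_{i+1}$ to be the remaining vertices with at least $\tfrac{\varepsilon_1}{2^{i+2}}p|Y|$ edges into $Y_i$. Then by construction any vertex surviving all rounds has lost at most $\sum_{i\ge0}\tfrac{\varepsilon_1}{2^{i+2}}p|Y|\le\tfrac{\varepsilon_1}{2}p|Y|$ neighbours, which is exactly the bound needed --- regardless of how large the $Y_i$ are. The Expander Mixing Lemma then yields the geometric decay $|Y_{i+1}|\lesssim \eta p^{2\varrho}|Y_i|$, and since the factor $p^{2\varrho}$ accumulates multiplicatively, after $K\approx 1/(2\varrho)$ rounds one has $|Y_{K}|\lesssim p|Y|$ and the process halts. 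This is precisely where $\varrho>0$ enters, as you correctly flag in your final paragraph; the halving-threshold device is the missing ingredient that makes the bookkeeping close.
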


We deduce Theorem~\ref{thm:main_turan} from Theorem~\ref{thm:common_injective_count}
and Lemma~\ref{lem:regular}.
Here we use subscripts such as $\tau_{a.b}$ to indicate that $\tau$ is the
parameter coming from Theorem $a.b$ or Lemma $a.b$.

\begin{proof}[Proof of Theorem~\ref{thm:main_turan}]
Suppose for a contradiction that $G$ contains no copy of $C_{2k+1}$.
Set 
$$
\varrho_{\ref{lem:regular}}=\frac{1}{2k-1},~~\alpha_{\ref{lem:regular}}=\frac{1}{2}+\delta,
~~\varepsilon_{\ref{lem:regular}}=\delta_{\ref{thm:common_injective_count}}=\frac{\delta^{2k}}{2^{11k}},
~~\text{ and }~~\mu_{\ref{thm:common_injective_count}}=\frac{\sqrt{\varepsilon_{\ref{lem:regular}}}}{8}$$ 
and let 
\[\eta=\min\{\eta_{\ref{thm:common_injective_count}},\eta_{\ref{lem:regular}},2^{-30k^2}\delta^{20k^3}\}.
\] 
Let $X\subset V(\Gamma)$ be the $\varepsilon_{\ref{lem:regular}}$-regular subset guaranteed by Lemma~\ref{lem:regular}.
Let $\overline{G}:=\Gamma\setminus G$ for brevity.
Theorem~\ref{thm:common_injective_count} combined with the $C_{2k+1}$-freeness of $G$
yields
\begin{align}\label{eq:lower}
    N_{C_{2k+1}}(\overline{G}[X]) = N_{C_{2k+1}}(G[X])+N_{C_{2k+1}}(\overline{G}[X])
 \geq \frac{1}{2^{2k}}p^{2k+1}\left(1-\frac{\delta^{2k}}{2^k}\right)|X|^{2k+1}.
\end{align}
The Expander Mixing Lemma implies
\begin{align*}
   \left| h_{C_{2k+1}}(\overline{G}[X]) - p\cdot (h_{P_{2k}}(\overline{G}[X]))^2\right|\leq \lambda\cdot h_{C_{2k}}(\Gamma).
\end{align*}
Since $\deg_{\overline{G}[X]}(x)\leq \tfrac{1}{2}(1-\delta)p|X|$ for each $x\in X$,
we have the bound $$h_{P_{2k}}(\overline{G}[X])\leq\frac{1}{2^{2k}}p^{2k}(1-\delta^{2k})|X|^{2k+1}$$ and thus, again using Lemma~\ref{lem: spectral argument},
\begin{align*}
   N_{C_{2k+1}}(\overline{G}[X]) &\leq  \frac{1}{2^{2k}}p^{2k+1}(1-\delta^{2k})|X|^{2k+1} + \lambda d^{2k}+\lambda^{2k-1}dn\\
   &\le\frac{1}{2^{2k}}p^{2k+1}|X|^{2k+1}\left(1-\delta^{2k}+\frac{2^{12k}\eta^{1/(2k-1)}}{\delta^{8k^2}}\right) .
\end{align*}
By the choice of  $\eta<2^{-30k^2}\delta^{20k^3}$, this contradicts to the lower bound~\eqref{eq:lower}.
\end{proof}

The proof of Lemma~\ref{lem:regular} is based on following lemma that appeared 
in~\cite{AHS14}*{Lemma~6}, where it was stated more generally for bijumbled graphs.

\begin{lemma}[{\cite{AHS14}*{p.\,8~(19)}}]\label{lem: part of lemma 6}
For all $\varrho,\alpha > 0$ and $0 < \varepsilon_1 < \alpha$ there exists an $\eta > 0$ such that the following holds:
let $\Gamma$ be an $(n,d,\lambda)$-graph, with $\lambda \le \eta p^{1+\varrho} n$ and let $G \subset \Gamma$ be a subgraph satisying $e(G) \ge \alpha  e(\Gamma)$. Then, there exists a set $Y\subset V(G)$ such that
\begin{enumerate}[label=\rmlabel]
	\item
	$|Y|\ge \sqrt{\frac{\varepsilon_1}{2}} n$ and
	\item
	$\deg_{G[Y]}(x) \ge (\alpha - \varepsilon_1) p |Y|$ for all $x \in Y$.\qed
\end{enumerate}
\end{lemma}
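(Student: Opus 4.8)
The plan is a standard iterative deletion (``min-degree cleaning'') argument. We may assume $\alpha\le 1$, since for $\alpha>1$ the hypothesis $e(G)\ge\alpha e(\Gamma)$ cannot hold as $G\subseteq\Gamma$, and we may also assume $V(G)=V(\Gamma)$. Starting from $W_0:=V(G)$, run the process: while the current set $W_i$ contains a vertex $x_i$ with $\deg_{G[W_i]}(x_i)<(\alpha-\varepsilon_1)\,p\,|W_i|$, delete it, setting $W_{i+1}:=W_i\setminus\{x_i\}$. Since $|W_i|$ strictly decreases, the process halts after some $t\le n$ steps with a set $Y:=W_t$ which, by the stopping rule, satisfies part~(ii) of the lemma. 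It remains to bound $m:=|Y|$ from below.

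For this I would track the edges of $G$ destroyed along the way. The $i$-th deletion removes exactly $\deg_{G[W_i]}(x_i)<(\alpha-\varepsilon_1)\,p\,(n-i)$ edges of $G$, and these edge sets are pairwise disjoint, so
\begin{align*}
	e(G)-e(G[Y])=\sum_{i=0}^{t-1}\deg_{G[W_i]}(x_i)
	<(\alpha-\varepsilon_1)\,p\sum_{i=0}^{n-m-1}(n-i)
	=(\alpha-\varepsilon_1)\,p\cdot\frac{n(n+1)-m(m+1)}{2}.
\end{align*}
I then feed in two more inputs: first, $e(G)\ge\alpha\, e(\Gamma)=\frac12\,\alpha p n^2$; and second, the Expander Mixing Lemma (Lemma~\ref{lem:EML}), applied with both weight functions equal to the indicator of $Y$, which gives $e(G[Y])\le e(\Gamma[Y])\le\frac12(pm^2+\lambda m)$. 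Substituting, multiplying through by $2$, rearranging so that the $\alpha p n^2$ terms combine to $\varepsilon_1 p n^2$ on the left, and then discarding the negative contributions while crudely bounding $\lambda m\le\lambda n$ and $(\alpha-\varepsilon_1)(n-m)\le n$ (the latter producing only a lower-order term $pn$), I arrive at
\begin{align*}
	\varepsilon_1\,p n^2<pm^2+\lambda n+pn.
\end{align*}
Since $\lambda\le\eta p^{1+\varrho}n\le\eta pn$ (because $p\le1$) and $pn=o(pn^2)$, dividing by $pn^2$ yields $\varepsilon_1<m^2/n^2+\eta+o(1)$.

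Finally, choosing $\eta$ small enough, say $\eta\le\varepsilon_1/4$, and $n$ large enough forces $m^2>\frac{\varepsilon_1}{2}n^2$, which is precisely part~(i); in particular $Y\neq\emptyset$, so the process cannot exhaust all of $V(G)$. The only step that needs genuine care --- and the sole place pseudorandomness enters --- is the upper bound on $e(G[Y])$: the trivial estimate $e(G[Y])\le\binom{m}{2}$ would cost a factor of $\sqrt{p}$ and deliver only $|Y|\ge\sqrt{\varepsilon_1 p/2}\,n$, whereas the Expander Mixing Lemma supplies the sharp $e(\Gamma[Y])\le\frac12(pm^2+\lambda m)$ needed for the stated size bound. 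The exponent $\varrho$ in the hypothesis plays no role in this argument ($\lambda\le\eta pn$ already suffices) and is retained only for consistency with Lemma~\ref{lem:regular}.
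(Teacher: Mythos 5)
Your proof is correct. Note that the paper itself does not prove this lemma --- it is imported verbatim from \cite{AHS14}*{p.\,8~(19)} (hence the \qed in the statement) --- so there is no in-paper argument to compare against; your iterative minimum-degree deletion, with the Expander Mixing Lemma supplying $e(\Gamma[Y])\le\tfrac12(pm^2+\lambda m)$ to close the edge count, is the standard way this is established and all the estimates check out. The only caveat is the lower-order term $pn$ (and the case $Y=\emptyset$), which forces you to take $n$ large; this is harmless since the paper assumes throughout that $n=|V(\Gamma)|$ is sufficiently large, but it is worth saying explicitly since the lemma as stated carries no $n_0$.
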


\begin{proof}[Proof of Lemma~\ref{lem:regular}]
Given  $\varrho> 0$ and $0 < \varepsilon < \alpha$, 
let $\varepsilon_1=\varepsilon/4$ and let $\eta_0$ be the $\eta$ obtained by Lemma~\ref{lem: part of lemma 6} applied with $\varrho$, $\alpha$, and $\varepsilon_1$.
Suppose $\eta\leq\eta_0$. We shall make $\eta$ smaller if necessary in what follows.
By Lemma~\ref{lem: part of lemma 6}, there exists a set $Y\subseteq V(G)$ such that $|Y|\ge \sqrt{\frac{\varepsilon_1}{2}} n$ and $\deg_{G[Y]}(x) \ge (\alpha - \varepsilon_1) p |Y|$ for all $x \in Y$.

We will iteratively delete vertices with deviating degrees.
Let $Y_-\subseteq Y$ be the subset of vertices with degree in $Y$ smaller than $(1-\varepsilon_1)p|Y|$.
Indeed, $e(Y,Y_-)<(1-\varepsilon_1)p|Y||Y_-|$.
However, the Expander Mixing Lemma guarantees
\begin{align*}
	 e(Y_-,Y)
	\ge p |Y||Y_-| - \lambda \sqrt{|Y||Y_-|}.
\end{align*}
Thus,
\begin{align*}
	\sqrt{|Y_-|}
	< \frac{\lambda \sqrt{|Y|}} { p |Y|- (1-\varepsilon_1) p |Y|}
	=  \frac{ \beta}{ \varepsilon_1 p  \sqrt{|Y|} }
	\le \frac{ \gamma p^ \varrho n}{\varepsilon_1   \sqrt{|Y|} },
\end{align*}
where the last inequality follows from $\lambda \le \eta p ^{1 + \varrho} n$.
Hence, by $|Y| \ge \sqrt{ \frac{\varepsilon_1}{2}} n$
\begin{align*}
	|Y_-|
<    \left( \frac{ \eta p^ \varrho \sqrt{|Y|} }{\varepsilon_1   \sqrt{ \frac{\varepsilon_1}{2}}}  \right)^2
	\le 2\eta^2 p^{2\varrho} |Y|/\varepsilon_1^{3}.
\end{align*}
Let $Y_+ \subseteq Y$ be the set of vertices with degree in $Y$ larger than $(1+\varepsilon_1)p|Y|$.
Repeating an analogous argument for $Y_+$ gives
\begin{align*}
	(1+\varepsilon_1)p|Y||Y_+|
	< e(Y_+,Y)
	\le p |Y||Y_+| + \lambda \sqrt{|Y||Y_+|},
\end{align*}
and thus, we have the same upper bound
\begin{align*}
	|Y_+|
	<   2\eta^2 p^{2\varrho} |Y|/\varepsilon_1^{3}.
\end{align*}

Now we are ready to start the deletion process. 
Let $Y_0 := Y_- \cup Y_+$ and for $i>0$ let 
\begin{align*}
	Y_{i+1} :=  \left\lbrace y \in Y \setminus \bigcup_{j=0}^{i} Y_j\colon e_{G[Y]}(y,Y_{i}) \ge \frac{1}{ 2^{i+2}}  \varepsilon_1 p |Y|  \right\rbrace,
\end{align*}
that is, $Y_{i+1}$ is the set of vertices that has `large' degree to the previously deleted vertices.
We claim that $|Y_i|$ decreases rapidly, as long as $\eta$ is small enough.
\begin{claim}\label{claim:iteration}
For each $i\ge 0$, if $\eta\leq \varepsilon_1^3/2^{2i+7}$, then
\begin{align}\label{iq:the new lemma 4 induction}
|Y_i| \le \eta^{i+1} p^{2(i+1)\varrho} |Y|.
\end{align}
\end{claim}
\begin{proof}[Proof of Claim~\ref{claim:iteration}]
The proof is by induction. As $\eta\leq 2\varepsilon_1^3$,
\begin{align}\label{iq:bound the high degree guys in Y}
	|Y_0|
	\le |Y_-| + |Y_+|
	< 4\eta^2 p^{2\varrho} |Y|/\varepsilon_1^3 \le \eta p^{2\varrho} |Y|
\end{align}
and hence,~\eqref{iq:the new lemma 4 induction} holds for $i=0$.
By definition, $e_\Gamma(Y_{i+1},Y_i) \ge \frac{\varepsilon_1}{2^{i+2}}  p |Y_i|  |Y_{i+1}|$.
By the Expander Mixing Lemma and $\lambda \le\eta  p^{1 + \varrho} n $,
\begin{align*}
	\frac{\varepsilon_1}{2^{i+2}}  p |Y_i|  |Y_{i+1}|
	\le  p |Y_i|  |Y_{i+1}| + \lambda \sqrt{ |Y_i|  |Y_{i+1}|}
	\le  p |Y_i|  |Y_{i+1}| + \eta p^{1+\varrho}n \sqrt{ |Y_i|  |Y_{i+1}|}.
\end{align*}
Thus,
\begin{align}\label{eq:iteration_shrink}
	|Y_{i+1}|
\le \frac{\eta n^2}{\left( \frac{\varepsilon_1}{ 2^{i+2}}  |Y|-|Y_i|\right)^2}\cdot \eta p^{2\varrho} |Y_i|.
\end{align}
By the induction hypothesis,
\begin{align*}
    \frac{\varepsilon_1}{ 2^{i+2}}|Y|-|Y_i|
    \geq \left(\frac{\varepsilon_1}{ 2^{i+2}}-\eta^{i+1}p^{2(i+1)\varrho} \right)\sqrt{\frac{\varepsilon_1}{2}} n
    \geq \left(\frac{\varepsilon_1}{ 2^{i+2}}-\eta\right)\sqrt{\frac{\varepsilon_1}{2}} n
    \geq \frac{\varepsilon_1^{3/2}}{2^{i+7/2}} n.
\end{align*}
Since $\eta\leq \varepsilon_1^{3}/2^{2i+7}$, we have $\eta n^2 \leq \left( \frac{\varepsilon_1}{ 2^{i+2}}  |Y|-|Y_i|\right)^2$.
The induction hypothesis and \eqref{eq:iteration_shrink} now prove the claim.
\end{proof}
As we cannot make $\eta$ smaller than $ 1/2^{2i+7}$ for arbitrary $i>0$, it is crucial to guarantee that the deletion process ends within a finite number of iterations. This is indeed true, and it allows us to take a judicious choice for $\eta$.

Let $K:= \frac{1}{2\varrho}-2$. Then the iteration terminates if $i\geq K$ and $\eta\leq \varepsilon_1/2$, since
\begin{align*}
    |Y_{i+1}|\leq \eta^{i+2}p^{(2i+4)\varrho}|Y|\leq 
    \frac{1}{2^{i+2}}\varepsilon_1 p|Y|,
\end{align*}
where the first inequality is by Claim~\ref{claim:iteration}.
Thus, we may take $\eta\leq \varepsilon_1^3/2^{3+1/\rho}$ so that Claim~\ref{claim:iteration} holds until the iteration terminates.

It remains to check that $X := Y \setminus \bigcup_{i=1}^{i^\ast} Y_i$ satisfies the three conditions of Lemma~\ref{lem:regular}.
Firstly,
\begin{align*}
	|X|
	= |Y|- \sum_{i=0}^{K} |Y_i|
	\ge \left( 1- \sum_{i=0}^{K} \eta^{i+1} p^{2(i+1)\varrho} \right) |Y|
	\ge  \left( 1- K  \eta\right)|Y|,
\end{align*}
so taking $\eta\leq 1/2K$ proves~\ref{it:1}, as $|Y|\geq  \sqrt{\frac{\varepsilon_1}{2}}n$.
Secondly, for $x \in X$,
\begin{align*}
	\deg_{\Gamma[X]}(x)
	< (1+ \varepsilon_1) p|Y|
	\le \frac{1+ \varepsilon_1 }{1- K \eta   }p|X|,
\end{align*}
and hence, letting $\eta \le \frac{\varepsilon_1}{K(1+2 \varepsilon_1)}$ proves the maximum degree condition in~\ref{it:3}.

For the proof of the minimum degree conditions in~\ref{it:2} and~\ref{it:3},
we estimate the number of deleted edges that are incident to each $x\in X$
and obtain
\begin{align*}
	\sum_{i=0}^{K} e_{\Gamma[Y]}(x,Y_i)\leq  |Y_{K}|+ \frac{\varepsilon_1}{2} p|Y| \sum_{i=0}^{\infty} \frac{1}{2^{i+1}}
	\le  \frac{1}{4}\varepsilon p|Y|\leq \frac{1}{2}\varepsilon p|X|.
\end{align*}
Therefore, as $x\notin Y_-$ by definition,
\begin{align*}
    \deg_{\Gamma[X]}(x)\geq \deg_{\Gamma[Y]}(x) - \sum_{i=0}^{K} e_{G[Y]}(x,Y_i)\geq (1-\varepsilon)p|X|.
\end{align*}
Similarly,
\begin{align*}
	\deg_{G[X]}(x)
	= \deg_{G[Y]}(x) - \sum_{i=0}^{K} e_{\Gamma[Y]}(x,Y_i)
	\ge (\alpha - \varepsilon )p|X|,
\end{align*}
which concludes the proof of the lemma.
\end{proof}

\begin{bibdiv}
\begin{biblist}

\bib{AHS14}{article}{
   author={Aigner-Horev, Elad},
   author={H\`an, H.},
   author={Schacht, Mathias},
   title={Extremal results for odd cycles in sparse pseudorandom graphs},
   journal={Combinatorica},
   volume={34},
   date={2014},
   number={4},
   pages={379--406},
   issn={0209-9683},
   review={\MR{3259810}},
   doi={10.1007/s00493-014-2912-y},
}

\bib{ABSS}{article}{
   author={Allen, Peter},
   author={B\"ottcher, J},
   author={Skokan, J.},
   author={Stein, M.},
   title={Regularity inheritance in pseudorandom graphs},
   eprint={1606.01168},
   note={Submitted},
}

\bib{AM}{article}{
   author={Alon, N.},
   author={Milman, V. D.},
   title={$\lambda_1,$ isoperimetric inequalities for graphs, and
   superconcentrators},
   journal={J. Combin. Theory Ser. B},
   volume={38},
   date={1985},
   number={1},
   pages={73--88},
   issn={0095-8956},
   review={\MR{782626}},
   doi={10.1016/0095-8956(85)90092-9},
}

\bib{A94}{article}{
   author={Alon, Noga},
   title={Explicit {R}amsey graphs and orthonormal labelings},
   journal={Electron. J. Combin.},
   volume={1},
   date={1994},
   pages={Research Paper 12, approx. 8},
   issn={1077-8926},
   review={\MR{1302331}},
}

\bib{AK}{article}{
   author={Alon, Noga},
   author={Kahale, Nabil},
   title={Approximating the independence number via the $\theta$-function},
   journal={Math. Programming},
   volume={80},
   date={1998},
   number={3, Ser. A},
   pages={253--264},
   issn={0025-5610},
   review={\MR{1603356}},
   doi={10.1007/BF01581168},
}

\bib{BR80}{article}{
   author={Burr, Stefan A.},
   author={Rosta, Vera},
   title={On the Ramsey multiplicities of graphs---problems and recent
   results},
   journal={J. Graph Theory},
   volume={4},
   date={1980},
   number={4},
   pages={347--361},
   issn={0364-9024},
   review={\MR{595601}},
   doi={10.1002/jgt.3190040403},
}

\bib{CFZ}{article}{
   author={Conlon, David},
   author={Fox, Jacob},
   author={Zhao, Yufei},
   title={Extremal results in sparse pseudorandom graphs},
   journal={Adv. Math.},
   volume={256},
   date={2014},
   pages={206--290},
   issn={0001-8708},
   review={\MR{3177293}},
   doi={10.1016/j.aim.2013.12.004},
}

\bib{E62common}{article}{
   author={Erd\H{o}s, P.},
   title={On the number of complete subgraphs contained in certain graphs},
   language={English, with Russian summary},
   journal={Magyar Tud. Akad. Mat. Kutat\'{o} Int. K\"{o}zl.},
   volume={7},
   date={1962},
   pages={459--464},
   review={\MR{0151956}},
}

\bib{HHKNR12}{article}{
   author={Hatami, Hamed},
   author={Hladk\'{y}, Jan},
   author={Kr\'{a}l, Daniel},
   author={Norine, Serguei},
   author={Razborov, Alexander},
   title={Non-three-colourable common graphs exist},
   journal={Combin. Probab. Comput.},
   volume={21},
   date={2012},
   number={5},
   pages={734--742},
   issn={0963-5483},
   review={\MR{2959863}},
   doi={10.1017/S0963548312000107},
}

\bib{JST96}{article}{
   author={Jagger, Chris},
   author={\v{S}\v{t}ov\'{\i}\v{c}ek, Pavel},
   author={Thomason, Andrew},
   title={Multiplicities of subgraphs},
   journal={Combinatorica},
   volume={16},
   date={1996},
   number={1},
   pages={123--141},
   issn={0209-9683},
   review={\MR{1394515}},
   doi={10.1007/BF01300130},
}

\bib{KLS10}{article}{
   author={Krivelevich, Michael},
   author={Lee, Choongbum},
   author={Sudakov, Benny},
   title={Resilient pancyclicity of random and pseudorandom graphs},
   journal={SIAM J. Discrete Math.},
   volume={24},
   date={2010},
   number={1},
   pages={1--16},
   issn={0895-4801},
   review={\MR{2600649}},
   doi={10.1137/090761148},
}

\bib{KS}{article}{
   author={Krivelevich, M.},
   author={Sudakov, B.},
   title={Pseudo-random graphs},
   conference={
      title={More sets, graphs and numbers},
   },
   book={
      series={Bolyai Soc. Math. Stud.},
      volume={15},
      publisher={Springer, Berlin},
   },
   date={2006},
   pages={199--262},
   review={\MR{2223394}},
   doi={10.1007/978-3-540-32439-3\_10},
}

\bib{Sid89}{article}{
   author={Sidorenko, A. F.},
   title={Cycles in graphs and functional inequalities},
   language={Russian},
   journal={Mat. Zametki},
   volume={46},
   date={1989},
   number={5},
   pages={72--79, 104},
   issn={0025-567X},
   translation={
      journal={Math. Notes},
      volume={46},
      date={1989},
      number={5-6},
      pages={877--882 (1990)},
      issn={0001-4346},
   },
   review={\MR{1033422}},
   doi={10.1007/BF01139620},
}

\bib{Sid96}{article}{
   author={Sidorenko, A. F.},
   title={Randomness friendly graphs},
   journal={Random Structures Algorithms},
   volume={8},
   date={1996},
   number={3},
   pages={229--241},
   issn={1042-9832},
   review={\MR{1605401}},
   doi={10.1002/(SICI)1098-2418(199605)8:3<229::AID-RSA6>3.3.CO;2-F},
}

\bib{SSzV}{article}{
   author={Sudakov, Benny},
   author={Szab\'{o}, Tibor},
   author={Vu, Van H.},
   title={A generalization of Tur\'{a}n's theorem},
   journal={J. Graph Theory},
   volume={49},
   date={2005},
   number={3},
   pages={187--195},
   issn={0364-9024},
   review={\MR{2145507}},
   doi={10.1002/jgt.20074},
}

\bib{T}{article}{
   author={Tanner, R. Michael},
   title={Explicit concentrators from generalized $N$-gons},
   journal={SIAM J. Algebraic Discrete Methods},
   volume={5},
   date={1984},
   number={3},
   pages={287--293},
   issn={0196-5212},
   review={\MR{752035}},
   doi={10.1137/0605030},
}

\bib{Thom89}{article}{
   author={Thomason, Andrew},
   title={A disproof of a conjecture of Erd\H{o}s in Ramsey theory},
   journal={J. London Math. Soc. (2)},
   volume={39},
   date={1989},
   number={2},
   pages={246--255},
   issn={0024-6107},
   review={\MR{991659}},
   doi={10.1112/jlms/s2-39.2.246},
}

\end{biblist}
\end{bibdiv}

\end{document}